\documentclass[a4paper,12pt]{amsart}
\usepackage{amsmath,amssymb}
\usepackage[shortlabels]{enumitem}

\usepackage{amsfonts}
\usepackage{mathtools}
\usepackage{dsfont}
\usepackage{esint}
\usepackage{amssymb,latexsym}
\usepackage{xcolor}

\newtheorem{theorem}{Theorem}[section]

\newtheorem{proposition}[theorem]{Proposition}
\newtheorem{lemma}[theorem]{Lemma}

\newtheorem{corollary}[theorem]{Corollary}

\newenvironment{proof*}{\vskip 2mm\noindent {}}{\hfill $\Box$ \vskip 2mm}

\numberwithin{equation}{section}

\def\R{\mathbb R}
\def\C{\mathbb C}
\def\D{\mathbb D}
\def\Z{\mathbb Z}

\renewcommand{\Im}{\operatorname{\rm{Im}}}
\renewcommand{\Re}{\operatorname{\rm{Re}}}

\def\n{\noindent}
\def\fr{\frac}

\def\Om{\Omega}
\def\pa{\partial}

\def\de{\delta}
\def\al{\alpha}

\def\ve{\varepsilon}
\def\eps{\varepsilon}
\def\va{\varphi}
\def\la{\lambda}

\textwidth=13.5cm
\textheight=23cm
\parindent=16pt
\oddsidemargin=1.1cm
\evensidemargin=1.1cm
\topmargin=-0.5cm

\DeclareMathOperator{\dist}{dist}


\title[Removable sets for pseudoconvexity]{Removable sets for pseudoconvexity for weakly smooth boundaries}

\author{Nguyen Quang Dieu}
\address{Nguyen Quang Dieu\\ Dai Hoc Su Pham Ha Noi, Ha Noi, Vietnam,
 \&  Thang Long Institute of Mathematics and Applied Sciences, Thang Long University, 
 Nghiem Xuan Yem, Hoang Mai, Ha Noi, Vietnam}

\email{ngquang.dieu@hnue.edu.vn}

\author{Pascal J. Thomas}
\address{P.J. Thomas\\
Univ Toulouse, CNRS,
Institut de Math\'ematiques de Toulouse, UMR5219 \\
 Toulouse, France}

\email{pascal.thomas@math.univ-toulouse.fr}

\thanks{Most of this work was completed during the visit of the second author at the Vietnam Institute for Advanced Studies in Mathematics, March 2025, in the framework of the project ``Complex Hessian equations on domains in $\C^n$''. }

\subjclass[2010]{32T99, 32W50}
\begin{document}

\keywords{pseudoconvexity, removable singularities, Sobolev spaces, subharmonic functions}

\begin{abstract} 
We show that for bounded domains in $\C^n$ with $\mathcal C^{1,1}$ smooth boundary, if
there is a closed set $F$ of $2n-1$-Lebesgue measure $0$ such that $\partial \Om \setminus F$ is $\mathcal C^{2}$-smooth
and locally pseudoconvex at every point, then $\Omega$ is globally pseudoconvex.  Unlike in the globally
$\mathcal C^{2}$-smooth case, the condition ``$F$ of (relative) empty interior'' is not enough to 
obtain such a result. We also give some results under peak-set type hypotheses, which in particular
provide a new proof of an old result of Grauert and Remmert about removable sets for pseudoconvexity under
minimal hypotheses of boundary regularity.
\end{abstract}

\dedicatory{This work is dedicated to Professor Nguy$\, \tilde{{\hat {e}}}$n V\u{a}n Khu\^e, the father of the first-named author, on the occasion of his 85th birthday.}

\maketitle

\section{Introduction and results}

The natural domains of existence of holomorphic functions in several variables, or \emph{domains of holomorphy,}
are characterized by \emph{pseudoconvexity}, which turns out to be a local condition on their boundaries.  
In the case where said boundary is $\mathcal C^{2}$-smooth, this can be understood as a sort of infinitesimal
plurisubharmonicity along the complex tangential directions, and written explicitly in terms of the Levi form of 
the defining function of the domain.

It is natural to ask whether the local condition can be relaxed in the spirit of the Riemann removable singularity
Theorem, so that local pseudoconvexity at ``most'' of the points of the boundary would be enough to conclude 
global pseudoconvexity. As early as 1956, Hans Grauert and Reinhold Remmert wondered ``ob es vielleicht gen\"ugt,
die Pseudokonvexit\"at von $G$ nur in einer 'gen\"ugend dicht liegenden Randpunktmenge von $G$' vorauszusetzen''
\cite[p. 152]{GR}\footnote{``Whether maybe it is enough to assume the pseudoconvexity of $G$ only in a 'sufficiently
dense subset of boundary points of $G$' ''.}. 

Let us say that a set $F \subset \partial \Omega$ is \emph{removable} (for pseudoconvexity,
under some regularity assumption on $\Omega$) if the fact that $\partial \Om \setminus F$ be locally
pseudoconvex at each of its points implies that $\Om$ is pseudoconvex.
Obviously some hypothesis must be made to avoid situations such as $\Omega:= B(0,1) \setminus \{0\} \subset \C^n$,
$n\ge 2$: the boundary is pseudoconvex at all points except $0$, yet the domain clearly is not pseudoconvex. 

Grauert and Remmert then define two notions of smallness,  \emph{slim} (``d\"unn'') sets 
and \emph{erasable} (``hebbar'') boundary points (the English terminology we have chosen is designed
to avoid confusion with other adjectives used in mathematics rather than be a literal translation from the German). 
Namely, a set $A\subset \partial \Omega$ will be called slim if for each $a\in A$ there exists an open neighborhood
$U$ of $a$ and $f\in \mathcal O(U\cap\Omega)$, $f \not \equiv 0$ such that $A\cap U \subset \overline{f^{-1}\{0\}}$ (loosely said, $A$ is locally
contained in the cluster set of the zeros of an analytic function).  A boundary point $p\in \partial \Omega$
will be called erasable if there exists a connected neighborhood $U$ of $p$ and $g \in \mathcal O(U)$, $g \not \equiv 0$,
such that $U\cap \partial \Omega \subset g^{-1}\{0\}$. In the above example of the punctured ball, 
the origin is clearly an erasable boundary point.

Grauert and Remmert prove that if $F \subset \partial \Omega$ is slim, and no point  $a\in F$
is erasable (with respect to $\partial \Omega$) then $F$ is removable \cite[Resultat 1., p. 153]{GR}. 

Here the regularity
assumption on $\partial \Omega$ is modest (roughly speaking, there should be no small components of the boundary,
at least near the exceptional set $F$),
but the requirement on $F$ is stringent: it should be rather small, lying inside the
cluster set of an analytic set. Similarly, we prove that when
we are provided with holomorphic or plurisubharmonic functions peaking on $F$, removability follows,
see Section \ref{struc}.  

At the other end of the spectrum, if we assume that $\partial \Om$ is $\mathcal C^{2}$-smooth, then any 
closed set of empty interior relative to $\partial \Om$ is removable (and those can be rather big), see 
Proposition \ref{c2}. As soon as 
we are looking at sets with some interior, no amount of regularity will cause those to be removable (one can always
have a small concavity inside the set).

It is thus interesting to see which classes of sets can be removable when $\partial \Om$ has some intermediate
regularity. 
One instance of such a result is to be found in \cite{ZZ},
where $\partial \Om$ is assumed to be given by a defining function
which is almost everywhere $\mathcal C^2$ with bounded second
derivatives (which is a wider class than $\mathcal C^{1,1}$),
but also with some shape restrictions. Here the exceptional set
(the points where no pseudoconvexity of the appropriate sort is required) has to be
assumed to be of zero $2n-2$-dimensional Hausdorff measure.

This question of the size of removable sets for pseudoconvexity
in terms of boundary regularity of the domain
is connected to a more recent question, the study of visibility 
of Kobayashi (almost) geodesics in domains of $\C^n$: Banik \cite{Ban} showed that there can be non-pseudoconvex
domains enjoying the visibility property which are typically domains with slim components in 
their boundary, while in \cite{NOT} it was shown that when the boundary of a domain
$\Om$ is $\mathcal C^{2}$-smooth, visibility does imply pseudoconvexity.  One would want to know which regularity
is optimal for one or the other situation, and a way to look for (necessarily less regular)
examples of non-pseudoconvex visibility domains
is to have their boundary strictly pseudoconvex outside of a (necessarily) non-removable set.  Any family of geodesics potentially
violating visibility would have to cluster on that exceptional set, and if it can be made small enough, one can hope
to obtain a contradiction.

Let $\lambda_m$
denote the Lebesgue measure on $\R^m$, and $B(a,r)$ the open ball of center $a$ and radius $r$.

\begin{theorem}
\label{mainsobo}
Let $\Omega= \{z\in\C^n: \rho(z)<0\}$ be a bounded domain, where $\rho$ is a continuous real-valued
function on $\C^n$ such that there is a neighborhood $\mathcal V$ of $\{ \rho(z)=0\}= \partial \Omega$ with
$\rho \in \mathcal C^{1,1}(\mathcal V)$ and
$|\nabla \rho (z)|\ge c_0 >0$, $z\in \mathcal V$.

Suppose that there is a closed set $F\subset \partial \Omega$
such that 
\begin{itemize}
\item
$\pa \Om \setminus F$ is $\mathcal C^2$-smooth;
\item
$\partial \Omega \setminus F$ is locally pseudoconvex, i.e. for any $\zeta \in \partial \Omega \setminus F$,
there exists $r>0$ such that $B(\zeta,r) \cap \Omega$ is pseudoconvex;
\item
$\lambda_{2n-1}(F)=0$.
\end{itemize}
Then $\Omega$ is pseudoconvex.
\end{theorem}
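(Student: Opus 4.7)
The plan is to show that $\Omega$ admits a continuous plurisubharmonic exhaustion, which for a bounded domain is equivalent to pseudoconvexity. The natural candidate is $u := -\log\delta$, where $\delta(z) := \dist(z,\partial\Omega)$. Since $\rho \in \mathcal C^{1,1}(\mathcal V)$ with $|\nabla\rho| \ge c_{0} > 0$, classical tubular neighborhood results provide a one-sided tubular neighborhood $V = \{z \in \Omega : \delta(z) < r_{0}\}$ on which $\delta$ is $\mathcal C^{1,1}$, the inward unit normal $\nu$ is Lipschitz on $\partial\Omega$, and the map $\Phi : \partial\Omega \times (0, r_{0}) \to V$, $\Phi(q, t) := q + t\nu(q)$, is a bi-Lipschitz homeomorphism with inverse $(\pi, \delta)$, where $\pi$ is the nearest-point projection.

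First I would show that $u$ is plurisubharmonic on the open set $V \setminus \pi^{-1}(F)$. For $q_{0} \in \partial\Omega \setminus F$, the local pseudoconvexity hypothesis yields $\epsilon > 0$ with $\Omega \cap B(q_{0}, \epsilon)$ pseudoconvex; on this local domain, $-\log\dist(\cdot, \partial(\Omega \cap B(q_{0}, \epsilon)))$ is plurisubharmonic by the classical exhaustion characterization of pseudoconvexity, and it coincides with $u$ in the subregion where the nearest boundary point lies on $\partial\Omega$. Patching these local contributions together, $u$ is plurisubharmonic on $V \setminus \pi^{-1}(F)$ once $r_{0}$ is chosen appropriately (possibly non-uniformly, with the local pseudoconvex radius being bounded below by $\dist(q_{0}, F)$ since $B(q_{0}, \epsilon) \cap F = \emptyset$ for $\epsilon < \dist(q_0, F)$).

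Next, I would upgrade plurisubharmonicity from $V \setminus \pi^{-1}(F)$ to all of $V$. Because $\Phi$ is bi-Lipschitz, $\pi^{-1}(F) \cap V = \Phi(F \times (0, r_{0}))$ has $\lambda_{2n}$-measure zero, using $\lambda_{2n-1}(F) = 0$. On any compact set $K \subset V$ bounded away from $\partial\Omega$, $u$ is $\mathcal C^{1,1}$, so its distributional complex Hessian is represented by the pointwise Hessian almost everywhere — an $L^{\infty}$ Hermitian matrix-valued function. This matrix is positive semidefinite at almost every point of $K \setminus \pi^{-1}(F)$ by the previous step, and the complement in $K$ has measure zero; hence $i\partial\bar\partial u \ge 0$ as a current on $K$, and by covering, on $V$. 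A suitable maximum of $u$ and $A|z|^{2}$, for large enough $A$, then produces a continuous plurisubharmonic exhaustion of $\Omega$, yielding pseudoconvexity.

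The hardest point will be the upgrade step: turning \emph{plurisubharmonic off a Lebesgue-null set} into \emph{plurisubharmonic everywhere}. This relies crucially on the $\mathcal C^{1,1}$ regularity, which makes the distributional complex Hessian of $u$ an $L^{\infty}$ matrix, so that positivity as a current reduces to almost-everywhere pointwise positivity. The hypothesis $\lambda_{2n-1}(F) = 0$ is precisely what makes $\pi^{-1}(F) \cap V$ Lebesgue-null in $\C^{n}$; a weaker hypothesis on $F$, such as mere empty relative interior in $\partial\Omega$, would not yield this null-set structure, in line with the counterexamples alluded to in the introduction.
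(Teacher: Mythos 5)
The overall strategy is attractive and, up to a point, parallels the paper's: exploit the $\mathcal C^{1,1}$ regularity to get an $L^\infty$ Hessian, note that the exceptional set $\pi^{-1}(F)$ is Lebesgue-null in $\C^n$, and conclude positivity of the Hessian a.e.\ implies positivity everywhere. The paper does exactly this kind of ``full-measure $\Rightarrow$ everywhere'' upgrade (Step~1 of the proof of Proposition~\ref{localc2}), but for the Levi form $-\Delta_{\tau(\va)}\va$ of a local graphing function rather than for $-\log\delta$.

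However, there is a genuine gap in your Step~1, the claim that $u=-\log\delta$ is plurisubharmonic on $V\setminus\pi^{-1}(F)$ for a \emph{fixed} one-sided tubular neighborhood $V=\{0<\delta<r_0\}$. Your patching argument gives plurisubharmonicity of $u$ only on a set of the form
\[
W \;=\; \bigcup_{\zeta\in\pa\Om\setminus F}\bigl(B(\zeta,\tfrac12 r_\zeta)\cap\Om\bigr),
\]
where $r_\zeta$ is the local pseudoconvexity radius (one needs $\delta(z)<r_\zeta/2$ in order for the nearest boundary point of $\Om\cap B(\zeta,r_\zeta)$ from $z$ to lie on $\pa\Om$). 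Since $r_\zeta\to 0$ as $\zeta\to F$, the set $W$ does \emph{not} fill out $V\setminus\pi^{-1}(F)$: the ``shadow cone'' $\{z\in V:\ \pi(z)\notin F,\ \delta(z)\ge \tfrac12 r_{\pi(z)}\}$ can very well have positive $\lambda_{2n}$-measure (think of $F$ a single point and the cone $\{\delta(z)\ge c\,\dist(\pi(z),F)\}$ over it). Your parenthetical remark about ``choosing $r_0$ non-uniformly'' does not repair this: if $V$ itself is shrunk near $F$, then $V$ is no longer a neighborhood of $F$ inside $\ov\Om$, so establishing $i\pa\bar\pa u\ge 0$ on that shrunken set says nothing about $\Om$ near $F$, which is exactly where the conclusion is needed.

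To close the gap you would need a different input at Step~1: rather than patching the local exhaustion functions of the pseudoconvex pieces $\Om\cap B(\zeta,r_\zeta)$, one has to propagate the boundary Levi condition a \emph{uniform} distance inward. Concretely, one should use the explicit Hessian formula for the distance function (the Weingarten/shape-operator identity $\nabla^2\delta(\zeta+t\nu_\zeta)=-S_\zeta(I-tS_\zeta)^{-1}$, valid a.e.\ for $\mathcal C^{1,1}$ boundaries) to show that at a.e.\ $z\in V$ the complex Hessian of $-\log\delta$ at $z$ is controlled by the Levi form of $\pa\Om$ at $\pi(z)$ and by the uniform $\mathcal C^{1,1}$ curvature bound; the Levi form is $\ge 0$ at every point of $\pa\Om\setminus F$ (this is where the $\mathcal C^2$-smoothness hypothesis, unused in your sketch, actually enters), and the tubular radius $r_0$ on which positivity survives then depends only on the global curvature bound, not on the local pseudoconvexity radius $r_\zeta$. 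The paper sidesteps the distance function entirely: it writes $\pa\Om$ as a local graph $x_1=\va(y_1,z')$, observes that $-\Delta_{\tau(\va)}\va$ is a nonnegative $L^\infty$ density off the null set $F'$, mollifies $\va$, estimates the commutator error (Lemma~\ref{convpsh}), and exhausts $\Om$ by smooth pseudoconvex perturbations; higher dimensions are then reduced to $n=2$ by slicing and a mean-value contradiction. If you prefer to keep the $-\log\delta$ route, the Weingarten-formula argument above is the missing ingredient.
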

This is proved in Section \ref{n2} in the case of complex dimension $2$, and in Section \ref{n3} in general.


We may ask whether the situation in the $\mathcal C^{1,1}$ case is similar to the $\mathcal C^2$ case.	
It is in fact quite different, and sets with empty (relative) interior may fail to be removable. 

\begin{proposition}
\label{c11nonpc}
There exists a bounded, non-pseudoconvex domain $\Omega \subset \C^2$ with a $\mathcal C^{1,1}$-smooth boundary,
and a closed set $E\subset \partial \Omega$ such that $\partial \Omega \setminus E$ is  dense in $\partial \Omega$, 
$\mathcal C^{2}$-smooth and strictly pseudoconvex in a neighborhood of each of its points.
\end{proposition}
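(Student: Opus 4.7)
My plan is to take $\Om$ as a small $\mathcal C^{1,1}$ perturbation of the unit ball in $\C^2$: set
\[
\rho(z,w) := |z|^2 + |w|^2 + \eps\, \va(z) - 1, \qquad \Om := \{\rho < 0\},
\]
where $\va \in \mathcal C^{1,1}(\C)$ is a bounded real-valued function and $\eps > 0$ is small. The exceptional set $E$ will be $\pa \Om \cap (K \times \C)$, with $K \subset \C$ the closed nowhere dense locus where $\va$ fails to be $\mathcal C^2$. The whole game is to engineer $\va$ so that (a) $\va \in \mathcal C^2$ with $\Delta \va \geq 0$ outside $K$, (b) $\Delta \va = -2a$ on a positive-measure subset of $K$, and (c) $K$ has empty interior while $\lambda_2(K) > 0$.

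To build $\va$ I would use a product-of-Cantor trick. Fix a fat Cantor set $K_0 \subset [0, r]$ of small diameter $r$ with $\lambda_1(K_0) > 0$, and build $h \in \mathcal C^{1,1}(\R)$ by integrating twice an $L^\infty$ function equal to $-a$ on $K_0$ and to a nonnegative $\mathcal C^\infty$ profile $\phi$ supported in $[0,r] \setminus K_0$, vanishing at the endpoints, with $\int_0^r \phi = a\, \lambda_1(K_0)$ (so that $h'$ has compact support, and hence $h$ is bounded). Then $h \in \mathcal C^\infty(\R \setminus K_0)$, $h'' = -a$ on $K_0$, $h'' \geq 0$ off $K_0$, and $\|h\|_\infty \lesssim a r^2$, $\|h'\|_\infty \lesssim a r$. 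Setting $\va(x+iy) := h(x) + h(y)$ gives a bounded $\mathcal C^{1,1}$ function on $\C$ with $\Delta \va(x,y) = h''(x) + h''(y)$, equal to $-2a$ on $K_0 \times K_0$ and nonnegative off $K := (K_0 \times \R) \cup (\R \times K_0)$. This $K$ is closed in $\C$, nowhere dense (its complement contains the dense open set $(\R \setminus K_0)^2$), and has positive $\lambda_2$-measure (it contains $K_0 \times K_0$), and $\va \in \mathcal C^2(\C \setminus K)$.

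The pseudoconvexity analysis is a direct Levi form computation. At a boundary point $(z_0, w_0)$ with $w_0 \neq 0$, the Levi form of $\rho$ restricted to the complex tangent is, up to a positive factor,
\[
\mathcal L_\rho(z_0, w_0) \;=\; 1 + \tfrac{\eps}{4}\Delta \va(z_0) + \frac{|\ov{z_0} + \eps\, \pa_z \va(z_0)|^2}{|w_0|^2},
\]
and at $w_0 = 0$ the complex tangent is $\{(0,v)\}$, on which the form equals $|v|^2$. I would take $a$ large and $\eps$ with $\eps a > 4$, while requiring $\eps(\|\va\|_\infty + \|\nabla \va\|_\infty) < \tfrac{1}{4}$; the latter is achievable for $r$ small, by the estimates above. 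With such a choice, $\nabla \rho$ does not vanish on $\{\rho = 0\}$, so $\pa \Om$ is $\mathcal C^{1,1}$-smooth and $\Om$ is bounded. Off $E$ we have $\Delta \va(z_0) \geq 0$, hence $\mathcal L_\rho \geq 1$, giving strict pseudoconvexity and $\mathcal C^2$-smoothness of $\pa\Om \setminus E$ (the latter because $\rho$ inherits $\mathcal C^2$ regularity off $K \times \C$ from $\va$). At boundary points with $z_0 \in K_0 \times K_0 \subset B(0, r\sqrt 2)$ (so $|w_0|^2 = 1 - O(r^2)$), the dominant term $1 - \eps a/2$ is strongly negative, so $\mathcal L_\rho < 0$ and $\Om$ is not pseudoconvex. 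Density of $\pa \Om \setminus E$ in $\pa \Om$ is then immediate, since $\pa \Om$ is locally a $\mathcal C^{1,1}$ graph over the $z$-plane by the implicit function theorem and $\C \setminus K$ is dense in $\C$.

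The main obstacle I anticipate is reconciling the apparently opposing requirements $\eps a > 4$ and $\eps \|\va\|_\infty \ll 1$; this is resolved by the small-diameter rescaling above, which shrinks $\|\va\|_\infty$ and $\|\nabla\va\|_\infty$ to zero while leaving $\|\Delta \va\|_\infty = 2a$ unchanged. The remaining choices---a smooth profile $\phi$ satisfying one integral condition and vanishing at the endpoints, plus the Levi form sign verification---are elementary.
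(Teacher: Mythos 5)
Your construction is correct and achieves the same goal as the paper's, but the route is genuinely different in two respects. First, the 1D building block: the paper obtains a $\mathcal C^{1,1}$ function $F$ by integrating twice the function $f(t)-t$, where $f$ is a devil's-staircase attached to a fat Cantor set $E_0$; the key payoff of that specific construction is the existence of a \emph{single} point $x_0$ at which $F$ admits a quadratic lower bound $F(x_0+t)\geq F(x_0)+tF'(x_0)+Lt^2$, which then contradicts the sub-mean-value inequality for $-\varphi$ directly. You instead prescribe the second derivative outright ($h''=-a$ on $K_0$, $h''\geq 0$ off it, $\int h''=0$), which is more transparent but leaves you with only an a.e.~statement on a positive-measure set rather than a pointwise quadratic witness. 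Second, your $\va(x+iy)=h(x)+h(y)$ gives the clean splitting $\Delta\va=h''(x)+h''(y)$ with no cross term, whereas the paper's $F(x-\tfrac12)\chi(4y)$ produces a cutoff term $F\chi''$ that must be absorbed.

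The one step you state too quickly is the final implication: ``$\mathcal L_\rho<0$ at boundary points over $K_0\times K_0$, so $\Om$ is not pseudoconvex.'' For a $\mathcal C^{1,1}$ (rather than $\mathcal C^2$) boundary, a negative Levi form at points where $\rho$ happens to be twice differentiable does not by itself refute pseudoconvexity; you need to use that this happens on a set of positive $\lambda_3$-measure. The cleanest way to close the gap in your framework is to observe that $\Om$ is a Hartogs domain with radius function $\exp\big(\tfrac12\log(1-|z|^2-\eps\va(z))\big)$, so pseudoconvexity of $\Om$ is equivalent to subharmonicity of $u:=-\tfrac12\log(1-|z|^2-\eps\va(z))$; since $u\in\mathcal C^{1,1}$, its distributional Laplacian is the $L^\infty_{\loc}$ function given by the pointwise a.e.\ Laplacian, and you have shown that this is $\leq -3+O(r^2)<0$ a.e.\ on $K_0\times K_0$ (a set of positive measure), so $\Delta u$ is not a nonnegative measure and $u$ is not subharmonic. (This is exactly the kind of argument the paper's choice of $x_0$ is designed to sidestep.) A couple of smaller points worth making explicit: $\phi$ need not be globally $\mathcal C^\infty$ --- it is enough that $g$ be bounded and smooth on the open set $\R\setminus K_0$ with the single integral constraint, which is achieved by placing on each complementary interval $(a_j,b_j)$ a bump with integral proportional to $b_j-a_j$ and hence uniformly bounded height; and connectedness of $\Om$ follows from star-shapedness with respect to the origin, which your normalization $\eps(\|\va\|_\infty+\|\nabla\va\|_\infty)<\tfrac14$ guarantees, though you should say so.
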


Our result should be considered a first step toward a more systematic study of removable sets.
Many results are known about the analogous question of removable singularities for subharmonic 
and plurisubharmonic functions; their general drift is that the more regular the function, the
larger the removable sets are allowed to be.  For instance, in $\C^n$, sets of Hausdorff measure $0$
in (real) dimension  $2n  - 1 + \alpha$ will be removable for plurisubharmonic functions of class
$\mathcal C^{1,\alpha}$, sets of Hausdorff measure $0$
in (real) dimension $2n - 2 + \alpha$ removable for plurisubharmonic functions 
in the H\"older class $\Lambda^\alpha$ \cite{DD}. It would be interesting to have 
such results for removable sets for pseudoconvexity; for instance, if we assume that $\partial \Om$ is globally $\mathcal C^{1,\alpha}$-smooth,
with $\alpha \in (0,1)$, are closed sets of Hausdorff measure $0$
in (real) dimension  $2n -2+\alpha$  removable? Examples given
in subsection \ref{zyg}, in the case $n=2$, show that there exist sets of Hausdorff dimension equal to $2+\alpha$ and
positive $(2+\alpha)$-Hausdorff measure   which are not removable for  $\mathcal C^{1,\alpha}$-smooth
boundaries.

\section{Use of structure hypotheses}
\label{struc}

\begin{proposition}
\label{c2}
Assume that $\pa \Om$ has $\mathcal C^2$-smooth boundary near a closed subset $F \subset \pa \Om$ and that $\pa \Om \setminus F$	
is locally pseudoconvex. Suppose that $F$ has empty interior relative to $\partial \Om.$ Then $\Om$ is pseudoconvex.
\end{proposition}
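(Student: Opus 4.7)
The strategy is to propagate local pseudoconvexity from $\partial\Om\setminus F$ to every point of $F$ by a density and continuity argument, and then invoke the classical theorem that local pseudoconvexity at every boundary point implies global pseudoconvexity.

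Fix $p\in F$. By assumption there is an open neighborhood $U$ of $p$ in $\C^n$ in which $\partial\Om=\{\rho=0\}$ for some $\rho\in\mathcal C^2(U)$ with $\nabla\rho\neq 0$ on $\partial\Om\cap U$. For each $q\in(\partial\Om\setminus F)\cap U$, the hypothesis of local pseudoconvexity combined with the $\mathcal C^2$ smoothness at $q$ implies, by the usual Levi criterion, that the Levi form $\mathcal L_\rho(q;\cdot)$ is positive semidefinite on the holomorphic tangent space $T_q^{1,0}\partial\Om=\{t\in\C^n:\sum_j \partial_{z_j}\rho(q)\,t_j=0\}$.

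Because $F$ has empty interior relative to $\partial\Om$, the subset $(\partial\Om\setminus F)\cap U$ is dense in $\partial\Om\cap U$. Choose any sequence $q_k\to p$ in $\partial\Om\setminus F$, and for a given $t\in T_p^{1,0}\partial\Om$ select $t_k\in T_{q_k}^{1,0}\partial\Om$ with $t_k\to t$; such a choice is available since the linear constraints defining the holomorphic tangent spaces depend continuously on the base point (through the functions $\partial_{z_j}\rho\in\mathcal C^1$). Passing to the limit in $\mathcal L_\rho(q_k;t_k)\ge 0$ and using the continuity of $\mathcal L_\rho$, we obtain $\mathcal L_\rho(p;t)\ge 0$. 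Hence $\partial\Om$ is Levi pseudoconvex at $p$, which, since $\partial\Om$ is $\mathcal C^2$ at $p$, means that $\Om$ is locally pseudoconvex at $p$.

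Combining with the hypothesis at points of $\partial\Om\setminus F$, $\Om$ is locally pseudoconvex at every boundary point, and hence globally pseudoconvex by the classical theorem of Oka (equivalently, by the plurisubharmonicity of $-\log\dist(\cdot,\partial\Om)$ on $\Om$). The only mildly delicate ingredient is the simultaneous approximation of base points and tangent vectors in the previous paragraph, but this is completely routine under the $\mathcal C^2$ regularity assumption, and presents no serious obstacle.
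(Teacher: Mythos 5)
Your proof is correct and follows essentially the same argument as the paper: approximate a point $p\in F$ by boundary points in $\partial\Om\setminus F$ (using that $F$ has empty relative interior), approximate tangent vectors accordingly, and pass to the limit in the Levi form using the $\mathcal C^2$ regularity of $\rho$. Your write-up is in fact slightly more explicit than the paper's about why the tangent-vector approximation is available.
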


\begin{proof}
It suffices to check the Levi condition at a given point $a \in F.$ 
Fix a $\mathcal C^2$ defining function $\rho$ of $\Om$ near $a,$ let $L\rho(a)$ be the 
hermitian form defined by $\left( \frac{\partial^2 \rho}{\partial z_j \partial \bar z_k} (a)\right)_{1\le j,k \le n}$.
It depends continuously on $a$.  For any vector $v \in T^{\C}_a \partial \Om$ (the complex tangent space
to the boundary at $a$), there are sequences $(a_p) \subset \partial \Om$ and $v_p \in T^{\C}_{a_p} \partial \Om$
such that $a_p\to a$, $v_p \to v$.  By continuity, $0 \le L\rho (a) (v,v) = \lim_p L\rho (a_p) (v_p,v_p)$.
\end{proof}

We proceed with a situation where the boundary contains enough peak plurisubharmonic functions.

\begin{proposition} 
Suppose that at every point $a \in \partial \Om \setminus F,$ there exist a neighborhood $U_a$ of $a$ 
and a negative plurisubharmonic function $\tilde u_a$ on $U_a \cap \Om$ such that 
$$
\lim_{z \to a} \tilde u_a (z)=0, \limsup_{z \to \xi} \tilde u_a (z)<0, \ \forall \xi \in \partial \Om \setminus \{a\}.
$$
Assume that there exists a neighborhood $U$ of $F$ and a plurisubharmonic function $v$ on $U$ satisfying the following properties:
 
$$v<0 \ \text{on}\ U \cap \Om, \lim\limits_{z \to F} v(z)=0,\limsup\limits_{z \to \xi} v (z)<0, \ \forall \xi \in \partial \Om \setminus F.$$
	Then $\Om$ is hyperconvex.
\end{proposition}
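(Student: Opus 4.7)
\begin{proof*}
The plan is to construct a bounded upper semi-continuous plurisubharmonic function $\rho^*\colon \Om \to [-1,0)$ with $\rho^*(z) \to 0$ as $z \to \pa \Om$; by standard approximation arguments for bounded PSH functions, this produces a continuous PSH exhaustion and hence yields the hyperconvexity of $\Om$.

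First I would globalize $v$: pick an open $W$ with $F \subset W \Subset U$. On the compact set $\pa W \cap \ov \Om$, the USC extension $v^*$ is strictly negative (from $v<0$ on $\pa W \cap \Om$, and from $\limsup v<0$ on $\pa W \cap \pa \Om \subset \pa \Om \setminus F$, since $F \subset W$). Choosing $\de > 0$ with $v^* \le -2\de$ there, the function
\[ \tilde v := \max(v/\de,-1)\text{ on }W \cap \Om, \qquad \tilde v := -1\text{ on }\Om \setminus W \]
is continuous PSH on $\Om$ with values in $[-1,0)$ and $\tilde v(z) \to 0$ as $z \to F$. Similarly, I would fix $r_0 > 0$ small enough that $\{z \in \Om : \dist(z,\pa \Om) > r_0\}$ is non-empty, and for each $a \in \pa\Om\setminus F$ choose a ball $V_a = B(a,r_a) \Subset U_a$ with $r_a \le r_0$. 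The strong maximum principle applied on $U_a \cap \Om$ (using the peak hypotheses on $u_a$) gives $u_a < 0$ there, and hence $u_a^* \le -2\de_a$ on $\pa V_a \cap \ov \Om$ for some $\de_a > 0$; defining
\[ \tilde u_a := \max(u_a/\de_a,-1)\text{ on }V_a \cap \Om, \qquad \tilde u_a := -1\text{ on }\Om \setminus V_a, \]
I obtain continuous PSH functions $\tilde u_a \colon \Om \to [-1, 0)$ with $\tilde u_a(z) \to 0$ as $z \to a$ and, crucially, $\tilde u_a \equiv -1$ outside $B(a, r_0)$.

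I would then set $\rho(z) := \max(\tilde v(z), \sup_{a \in \pa\Om \setminus F} \tilde u_a(z))$ and let $\rho^*$ be its upper semi-continuous regularization; the uniform upper bound $\tilde v, \tilde u_a \le 0$ makes $\rho^*$ plurisubharmonic on $\Om$, with $\rho^* \le 0$. Boundary convergence is immediate: if $z \to a \in F$ then $\rho(z) \ge \tilde v(z) \to 0$, and if $z \to a \in \pa\Om \setminus F$ then $\rho(z) \ge \tilde u_a(z) \to 0$.

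The main obstacle is to show that $\rho^* < 0$ strictly on $\Om$, since the supremum over the uncountable family $\{\tilde u_a\}$ could a priori approach $0$ at some interior point. The saving observation is the uniform choice of $r_0$: whenever $d(w) := \dist(w,\pa\Om) > r_0$ one has $|w-a| \ge d(w) > r_0$ for every $a \in \pa \Om$, so $\tilde u_a(w) = -1$ for all $a$, and $\rho(w) = \tilde v(w) \vee (-1)$ is continuous and strictly negative in a neighborhood of any $z$ in the non-empty open set $\{d > r_0\}$, giving $\rho^*(z) < 0$ there. If $\rho^*(z_0) = 0$ were to hold anywhere in $\Om$, the strong maximum principle on the connected domain $\Om$ would force $\rho^* \equiv 0$, contradicting the negativity just established. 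Therefore $\rho^* < 0$ on $\Om$, $\rho^*$ is the sought bounded PSH exhaustion, and $\Om$ is hyperconvex.
\end{proof*}
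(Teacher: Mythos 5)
Your argument is essentially the paper's own proof, with the same three-step structure: truncate the local peak functions $u_a$ to get global psh functions bounded by $-1$ away from each base point, take their regularized supremum, and patch with $v$ (here you globalize $v$ first and fold it into the same max) to produce a bounded negative psh exhaustion of $\Om$. The only organizational variation is that you make each $\tilde u_a \equiv -1$ outside a uniformly small ball $B(a,r_0)$ and then invoke the strong maximum principle on the nonempty open set $\{\dist(\cdot,\pa\Om)>r_0\}$, whereas the paper fixes one interior ball $B$, arranges $u_a \equiv -1$ on $B$ for every $a$, and thereby gets $u=-1$ on $B$; both devices play the identical role of keeping the regularized supremum strictly negative inside $\Om$.
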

\n
{\bf Remarks.} 1. The existence of $\tilde u_a$ is obviously confirmed if $\partial \Om$ is strictly pseudoconvex at $a$.

\n
2. This should be compared to  \cite[Propositions 1.2 and 1.3]{NPTZ}.  The exceptional set in the boundary
of the domain constructed in \cite[Lemma 2.3]{NPTZ} cannot be a peak set for such a plurisubharmonic function $v$.

\begin{proof}
{\it Step 1.} Fixed a closed ball $B$ inside $\Om$. 
We construct a global peak plurisubharmonic function at each point $a \in \partial \Om \setminus F$ which 
is constant equal to $-1$ on $B.$
	Take $\tilde u_a$ as in  the assumption, and  $r_a>0, \de_a>0$ so small such that $B \cap B(a,r_a)=\emptyset$ and
	$$
	-\de_a> \sup_{\Om \cap \partial B(a,r_a)} \tilde u_a.
	$$
	Now we set
	$$u_a:= \begin{cases}
	\max\{ \frac1{\de_a} \tilde u_a, -1\} & B(a,r_a) \cap \Om\\
	-1 &\Om \setminus B(a,r_a).
	\end{cases}$$
	Then $u_a$ is the desired peak function.
	
	\n 
	{\it Step 2.} We build a global peak psh function at $\partial \Om \setminus F$.
	Set 
	$$u(z):= (\sup \{u_a (z): a \in (\partial \Om) \setminus F\})^*, z \in \Om,$$
	where $v^*$ denotes the upper semicontinuous regularization of a
	function $v$. 
	Then $u$ is plurisubharmonic on $\Om$, $-1 \le u<0$ on $\Om, u=-1$ on $B$ and $u(z) \to 0$ as $z \to a$, for any $a  \in (\partial \Om) \setminus F$.
	
	\n 
	{\it Step 3.} We patch $v$ and $u$ to yield a global negative exhaustion function for $\Omega.$ To this end, by shrinking
	$U$ we may take $\la>0$ so small such that  $\la u \ge -\la \ge  v$ on $\partial U \cap \Om.$
	Let 
$$
\va:= 
\begin{cases} 
	\max \{\la u, v\}& \ \text{on}\ U \cap \Om,\\
\la u& \ \text{on}\ \Om \setminus U.
\end{cases}
$$
	Then $\va$ is the desired negative psh exhaustion function for $\Om.$
\end{proof}

\begin{proposition}	
\label{nhdU}
	Let $\Om$ be a bounded domain in $\C^n$ and $F \subset \pa \Om$ be a non-empty closed set satisfying the following properties:
\begin{enumerate}
\item
$\pa \Om \setminus F$ is locally pseudoconvex;
\item
There exists a pseudoconvex open neighborhood $U$ of $F$ in $\C^n$ and a continuous plurisubharmonic function $v$ on $U \cap \Om$ such that 
\[
F=\{z \in \pa \Om: \lim_{\xi \to z} v(\xi)=+\infty\}.
\]
\end{enumerate}
	Then $\Om$ is pseudoconvex.
\end{proposition}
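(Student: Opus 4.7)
I would show that $\Omega$ admits a continuous plurisubharmonic exhaustion function, which is a standard characterization of pseudoconvexity. The exhaustion is built as $\varphi = v_1 + v_2$, where $v_1$ tends to $+\infty$ on $F$ and is built from the given $v$, while $v_2$ tends to $+\infty$ on $\partial\Omega \setminus F$ and is built from the boundary distance $-\log \delta_\Omega$, which by hypothesis~(1) and Oka's theorem is plurisubharmonic on a one-sided neighborhood of $\partial\Omega \setminus F$ in $\Omega$.

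Concretely, let $W_1$ be a thin tubular neighborhood of $F$ with $\overline{W_1} \subset U$. Since $v$ tends to $+\infty$ only on $F$ and is continuous on $U \cap \Omega$, a compactness argument shows that $v$ is bounded above on $\partial W_1 \cap \Omega$ by some constant $M_1$. Then
\[
v_1(z) := \begin{cases}\max(v(z),M_1),& z \in W_1 \cap \Omega,\\ M_1,& z \in \Omega \setminus \overline{W_1}\end{cases}
\]
is continuous and plurisubharmonic on $\Omega$: by continuity of $v$ and the choice $M_1 > \sup_{\partial W_1 \cap \Omega} v$, $v_1$ equals the constant $M_1$ on an open neighborhood of $\partial W_1 \cap \Omega$, so it glues smoothly with the exterior definition, and $v_1 \to +\infty$ on $F$. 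An analogous cutoff applied to $-\log \delta_\Omega$ on an appropriate tube $W_2$ around $\partial\Omega \setminus F$ yields $v_2$ continuous and plurisubharmonic on $\Omega$ with $v_2 \to +\infty$ on $\partial\Omega \setminus F$. Then $\varphi := v_1 + v_2$ is continuous plurisubharmonic, blows up on all of $\partial\Omega$, and hence is an exhaustion since $\Omega$ is bounded.

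The main difficulty is the construction of $v_2$: the tube $W_2$ must cover all of $\partial\Omega \setminus F$, stay within the region of plurisubharmonicity of $-\log \delta_\Omega$, and yield a uniformly bounded trace of $-\log \delta_\Omega$ on its seam. Since the local pseudoconvexity radius at $p \in \partial\Omega \setminus F$ may degenerate as $p$ approaches $F$, no single tube of constant thickness does the job. The remedy is to build $v_2$ as a locally finite weighted sum $\sum_n 2^{-n} v_{2,n}$ of cutoffs associated with tubular neighborhoods of the compact sets $K_n := \{p \in \partial\Omega \setminus F : \dist(p,F) \ge 1/n\}$, on each of which compactness provides a uniform lower bound for the local pseudoconvexity radius, so that each $v_{2,n}$ is plurisubharmonic on $\Omega$ and blows up on $K_n$; their weighted sum then blows up at every point of $\partial\Omega \setminus F$ while remaining bounded on compact subsets of $\Omega$.
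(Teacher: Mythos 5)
Your plan to produce a single global plurisubharmonic exhaustion $\varphi = v_1 + v_2$ runs into a genuine gap at the seam of each cutoff, and it is the same gap twice. For $v_1$: you assert that $v$ is bounded above on $\partial W_1 \cap \Omega$ ``by compactness.'' But the compact set $\overline{\partial W_1 \cap \Omega}$ contains points of $\partial \Omega \setminus F$ (where the rim of the tube meets $\partial\Omega$), and at such a point $z$ the hypothesis only guarantees $\lim_{\xi\to z} v(\xi) \neq +\infty$; this is perfectly compatible with $\limsup_{\xi\to z} v(\xi) = +\infty$, since the limit may simply fail to exist. So no finite $M_1$ is forthcoming, and $\max(v,M_1)$ need not equal $M_1$ near the seam, so $v_1$ need not extend plurisubharmonically past $\partial W_1$. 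The $v_2$ half has the analogous (and worse) problem: $\partial W_{2,n}\cap\Omega$ reaches $\partial\Omega$ along the rim of $K_n$, where $\delta_\Omega\to 0$ and $-\log\delta_\Omega$ is unbounded, so that cutoff cannot be capped either; and even granting the cutoffs one would still need to control the capping constants $M_n$ so that $\sum_n 2^{-n}v_{2,n}$ converges locally uniformly, which is not arranged.

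The paper sidesteps all of this by never gluing across a seam that touches $\partial\Omega$. It sets $\Omega_j := \{z\in U\cap\Omega : v(z)<j\}$ and checks that each $\Omega_j$ is \emph{locally} pseudoconvex at every boundary point: at $p\in\partial\Omega_j\cap\Omega$, the function $1/(j-v)$ is a local plurisubharmonic exhaustion; at $p\in\partial\Omega_j\cap\partial\Omega$, one has automatically $p\notin F$ (since $v\to\infty$ at $F$ forces $F\cap\overline{\Omega_j}=\emptyset$), so hypothesis~(1) gives a local plurisubharmonic exhaustion of $V\cap\Omega$ which, combined with $1/(j-v)$ by a $\max$, exhausts $V\cap\Omega_j$. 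Hence each $\Omega_j$ is pseudoconvex, the increasing union $U\cap\Omega$ is pseudoconvex, and a final local-to-global pass gives that $\Omega$ is pseudoconvex. If you want to rescue your scheme, the fix is precisely to replace your distance-to-$F$ tube $W_1$ by the sublevel sets $\{v<j\}$, whose relative boundary in $U\cap\Omega$ carries the built-in bound $v=j$ — but at that point you have essentially reproduced the paper's argument.
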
	

\begin{proof}
	Let $\Om':=U \cap \Om.$ It suffices to show $\Om'$ is pseudoconvex. 
	Set $$\Om_j:= \{z \in \Om': v(z)<j\}, v_j (z):=\fr1{j-v (z)}, \ z \in \Om_j.$$
	We claim that $\partial \Om_j$ is locally pseudoconvex at every boudary point $p \in \partial \Om_j$. Indeed, this is true when $v(p)=j$,
	since we have $v_j$ is plurisubharmonic on $\Om_j$ and tends to $+\infty$ at every boundary point near $p.$
	If $p \in \partial \Om,$ then by $(2)$ we have $p \notin F.$	Let $V$ be a small 
	neighborhood of $p$ such that $V \cap \Om$ is pseudoconvex. Then, we can find a plurisubharmonic exhaustion function $\va$ for $V \cap \Om \cap U.$ So $\max \{\va, v_j\}$ is a plurisubharmonic exhaustion function for $V \cap \Om_j.$ If $p \in \partial U,$ an easier version of the same reasoning holds.
	
	So we have proved that $\Om_j$ is locally pseudoconvex. Thus $\Om_j$ is pseudoconvex. Since 
	$\Om_j \uparrow \Om',$ we infer that $\Om'$ is pseudoconvex as well.
\end{proof}
As a consequence of this result we recover a weaker version of the theorem of Grauert-Remmert \cite{GR} mentioned in the introduction.
\begin{corollary} 
\label{alaGR}	
Let $\Om$ be a bounded domain in $\C^n$ and $F \subset \pa \Om$ be a non-empty closed set satisfying the following properties:
\begin{enumerate}
\item
$\pa \Om \setminus F$ is locally pseudoconvex;
\item
There exists a holomorphic function $h$ on $\Om, h \not \equiv 0$ such that 
\newline
$\lim_{z \to F} h(z)=0.$
\item
For any $\zeta \in F$, any neighborhood $U$ of $\zeta$, and any function $g$ holomorphic in $U\cap \Omega$, $g \not \equiv 0$,
then $U \cap \partial \Om \not \subset \overline{g^{-1}(0)}$.
\end{enumerate}
Then $\Om$ is pseudoconvex.
\end{corollary}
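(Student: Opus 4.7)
The plan is to apply Proposition~\ref{nhdU} with a suitable choice of $U$ and $v$. Since $F$ is compact (closed in the bounded set $\partial\Omega$), take $U$ to be any open ball in $\C^n$ large enough to contain $F$; such a ball is pseudoconvex. For $v$, the natural candidate furnished by hypothesis (2) is $v(z):=-\log|h(z)|$: on the complement $\Omega\setminus Z$ of the interior zero set $Z:=h^{-1}(0)$, the function $\log|h|$ is locally the real part of a branch of $\log h$, hence $v$ is pluriharmonic --- in particular continuous and plurisubharmonic --- there, and by (2) we have $v(z)\to+\infty$ as $z\to F$.

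The difficulty is that $v=+\infty$ on $Z$, so $v$ is not continuous on all of $U\cap\Omega$. To handle this, I would imitate the argument in the proof of Proposition~\ref{nhdU}: set $\Omega_j:=\{z\in U\cap\Omega:|h(z)|>e^{-j}\}$. The boundary of $\Omega_j$ decomposes into the interior level set $\{|h|=e^{-j}\}$, locally pseudoconvex as a sublevel-set boundary of the plurisubharmonic function $-\log|h|$, together with a portion of $\partial\Omega$ lying in $\partial\Omega\setminus F$ (since $h\to 0$ along $F$), locally pseudoconvex by (1). Each $\Omega_j$ is therefore pseudoconvex, and so is their increasing union $(U\cap\Omega)\setminus Z$.

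The remaining step --- deducing pseudoconvexity of $\Omega$ from that of $(U\cap\Omega)\setminus Z$ --- is where hypothesis (3) becomes decisive. That this step genuinely needs (3) is shown by the punctured ball $B(0,1)\setminus\{0\}\subset\C^n$ with $h(z)=z_1$: hypotheses (1) and (2) hold, but $\Omega$ is not pseudoconvex, and (3) fails at $0\in F$ since a small ball $U\ni 0$ satisfies $U\cap\partial\Omega=\{0\}\subset\overline{h^{-1}(0)}$. The strategy is to argue by contradiction: if $\Omega$ were not locally pseudoconvex at some $p\in F$, Hartogs extension would produce a holomorphic extension $\tilde h$ of $h$ to a neighborhood $V$ of $p$ with $\tilde h(p)=0$. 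One then aims to manufacture, from $\tilde h$ and the local non-pseudoconvexity geometry at $p$, a holomorphic function $g$ on $U\cap\Omega$ for some $U\subset V$ such that $U\cap\partial\Omega\subset\overline{g^{-1}(0)}$, directly contradicting (3).

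The main obstacle will be completing this last step. The zero set of $\tilde h$ has complex codimension one and meets $\partial\Omega$ generically in real codimension at least three, so producing the full containment $U\cap\partial\Omega\subset\overline{g^{-1}(0)}$ requires more than $g=\tilde h$: one should exploit the convergence $h\to 0$ along $F$ together with the fact that \emph{every} holomorphic function on $\Omega$ extends across the non-pseudoconvex bulge near $p$, in order to construct a more elaborate $g$ whose boundary zero cluster covers all of $U\cap\partial\Omega$.
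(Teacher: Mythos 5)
Your Step 1 (showing that $\Omega\setminus Z$ is pseudoconvex, where $Z=h^{-1}(0)$) is correct and is exactly how the paper begins: one applies Proposition~\ref{nhdU} to $\Omega':=\Omega\setminus Z$ with $U$ a large ball containing $\overline\Omega$ and $v=1/|h|$, or equivalently argues directly with $\Omega_j:=\{|h|>e^{-j}\}$ as you do. So far so good.

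The gap is in Step~2, and it is the gap you yourself flag. Passing from ``$\Omega\setminus Z$ is pseudoconvex'' to ``$\Omega$ is pseudoconvex'' genuinely requires hypothesis~(3), and the obstacle you describe --- that a local Hartogs extension of $h$ across a non-pseudoconvex boundary point $p\in F$ only gives you a single holomorphic function $\tilde h$, whose zero set is complex codimension one and therefore has no reason to cluster on all of $U\cap\partial\Omega$ --- is precisely why the naive approach does not close. Exploiting ``every holomorphic function on $\Omega$ extends across the bulge'' is the right instinct, but to make it work one needs a global statement about envelopes of holomorphy of complements of analytic sets, not a pointwise Hartogs argument. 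The missing tool is \emph{Dloussky's theorem} (as corrected by Porten): the envelope of holomorphy of $\Omega\setminus Z$, where $Z$ is the zero set of a holomorphic function, has the form $\hat\Omega\setminus\hat Z$ where $\hat\Omega$ is the envelope of $\Omega$ and $\hat Z$ is an analytic hypersurface lying over part of $Z$. Since by Step~1 $\Omega\setminus Z$ is already pseudoconvex, it equals its own envelope, so $\hat\Omega\setminus\hat Z=\Omega\setminus Z$, forcing $\overline{\hat\Omega}\subset\overline{\Omega}$ and, crucially, $\hat\Omega\setminus\Omega\subset\hat Z\subset\overline Z$. Thus if $\Omega$ were not pseudoconvex one gets a point $p\in\hat\Omega\cap\partial\Omega$ around which every boundary point of $\Omega$ lies in $\overline Z=\overline{h^{-1}(0)}$, and taking $g=h$ contradicts~(3). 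Without invoking this structure theorem for envelopes of holomorphy, the contradiction you aim for cannot be manufactured from $\tilde h$ alone, and your proposal as written does not yield a complete proof.
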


\begin{proof}
	We split the proof into two steps.
	
	\n 
	{\it Step 1.} Set $Z:=\{z \in \Om: h(z)=0\}.$ We claim that $\Om':=\Om \setminus Z$ is pseudoconvex. 
	For this, it suffices to apply the Proposition \ref{nhdU} to $\Om'$, with $U$ a big ball containing $\bar \Om$ and $v:=1/|h|.$
	
	\n 
{\it Step 2.} Consider $(\hat \Om,  \pi)$ the envelope of holomorphy of $\Om$, which is a priori a Riemann domain
over $\C^n$, with $\pi$ the projection to $\C^n$, a local homeomorphism;
denote by $\alpha$
the canonical immersion $\Om \longrightarrow \hat \Om$.

Let $\hat h$ be the extension of $h$ to a holomorphic function
on $\hat \Om$, i.e. $\hat h \circ \alpha =h$. Then, if we let
$\hat Z:= \hat h^{-1}(0)$, $Z=\alpha^{-1}(\hat Z)$.

Then \cite[Satz 7]{GR} states that 
$(\hat \Om \setminus \hat Z, \pi|_{\hat \Om \setminus \hat Z})$
is the hull of holomorphy of $\Om \setminus Z$.


But $\Om \setminus Z$ is pseudoconvex, so its envelope of holomorphy is equal to itself and $\alpha|_{\Om \setminus Z}$
can be taken to be the identity. Using that identification, $\hat \Om \setminus \hat Z = \Om \setminus Z$.
In particular, 
\[
\overline{\hat \Om} \subset \overline{\hat \Om \setminus \hat Z} = \overline{\Om \setminus Z}= \overline \Omega.
\]
As a consequence, $\hat \Omega \setminus \Om \subset \overline \Omega\setminus \Om=\partial \Om$.  Let $p\in \hat \Omega \setminus \Om$,
then by the Grauert and Remmert theorem stated above, 
$p \in \hat \Omega= \hat Z \cup (\Om \setminus Z)\hat{} $, but $(\Om \setminus Z)\hat{} \subset \Om$
so $p\in \hat Z \subset \overline Z$.  Let $U$ be a neighborhood of $p$ in $\hat \Omega$, if $q \in U\cap \partial \Om$,
then $q\notin \Om$, so $q \in \hat \Om \setminus \Om$, so by the above $q \in \overline Z$, a contradiction with
the third hypothesis.
\end{proof}

\section{The case of dimension $2$}
\label{n2}

\subsection{Local expression of the Levi condition.	}

With the notations of Theorem \ref{mainsobo}, in an open subset where $\rho$ is of class
$\mathcal C^2$,  the boundary of $\Omega$ is pseudoconvex if
and only if 
\begin{equation} 
\label{diffpc}
 \fr{\pa \rho^2}{\pa z_1 \pa \bar z_1} \left \vert \fr{\pa \rho}{\pa z_2} \right \vert^2+
\fr {\pa \rho^2}{\pa z_2 \pa \bar z_2}  \left \vert \fr{\pa \rho}{\pa z_1} \right \vert^2
- 2\Re \Big (\fr{\pa^2 \rho}{\pa z_1 \pa \bar z_2} \fr{\pa \rho}{\pa \bar z_1}\fr{\pa \rho}{\pa z_2} \Big ) \ge 0.
\end{equation}

Since by hypothesis $\rho$ is of class $\mathcal C^1$ 
and its gradient never vanishes near $a \in \partial \Omega$, we may choose local coordinates so that 
$a=(0,0)$ and $\nabla \rho (a) = (1,0)$. Applying the Implicit Function Theorem, 
we obtain $U$, $U'$  open neighborhoods of $0$ in $\C^2$ and $ \R \times \C$ and a new defining function
\begin{equation} 
\label{localdef}
\rho_1(z_1,z_2):= x_1-\va (y_1, z_2), \va \in C^1 ( U'), \va(0,0)=0, \nabla\va(0,0)=0,
\end{equation}
such that $$\{z\in U: \rho(z)<0\} = \{ (x_1+iy_1,z_2): (y_1,z_2)\in U', x_1<\va (y_1, z_2)\}.$$
Let us assume that $U$ is always chosen convex, so that the pseudoconvexity of $\Omega \cap U$ only depends
on the boundary of $\Omega$. 

\vskip.5cm

Let $$F'= \{(y_1,z_2)\in \R \times \C: (\va (y_1, z_2)+iy_1,z_2) \in F\cap U\}.$$ 
It is of measure zero if and only if $F$ is.

When $(y_1,z_2)\in U' \setminus F'$, we may rewrite \eqref{diffpc} as
\begin{equation}
 \label{pcphi}
-\frac14 \fr{\pa^2 \va}{\pa y_1^2} \left \vert \fr{\pa \va}{\pa z_2} \right \vert^2
	- \frac12 \Re \left(i 
	\Big (1+i \fr{\pa \va}{\pa y_1} \Big) \fr{\pa \va}{\pa  z_2}  \fr{\pa^2 \va}{\pa y_1 \pa \bar z_2}  \right)
	- \frac14 \fr{\pa^2 \va}{\pa z_2 \pa \bar z_2} \left  \vert 1- i \fr{\pa \va}{\pa y_1} \right  \vert^2
 \ge 0.
\end{equation}

Define for any pair of continuous functions $\tau=(\tau_1, \tau_2)$, and any distribution $v \in \mathcal D'(U)$ 
such that all its second order partials are measures, 
\begin{equation}
\label{deltatau}
\Delta_\tau v:= |\tau_1|^2 \fr{\pa^2 v}{\pa y_1^2} + 2 \Re \left( i \tau_1 \bar \tau_2 \fr{\pa^2 v}{\pa y_1 \pa \bar z_2} \right)
+ |\tau_2|^2  \fr{\pa^2 v}{\pa z_2 \pa \bar z_2}.
\end{equation}
In particular, let 
\[
\tau(\va):= \left( -\frac12  \fr{\pa \va}{\pa z_2}, \frac12 \left( 1+i \fr{\pa \va}{\pa y_1} \right) \right).
\]

Then the condition \eqref{pcphi} can be written $-\Delta_{\tau(\varphi)} \varphi \ge 0$,
on $U' \setminus F'$.

\subsection{Local result.}

In this context, we can relax the regularity assumption slightly. 
For $k\in \Z_+$, $p\in(0,\infty]$, denote by $W^{k,p}$ the Sobolev space of order $k$ and exponent $p$ (roughly
speaking, the space of distributions with derivatives up to order $k$ in $L^p$). 

\begin{proposition}
\label{localc2}
Let $U=B(0,r)$, and $\Om \subset \C^2$ be a bounded domain and $F$ be a closed subset of
$\pa \Om$ such that $\lambda_3(F)=0$ and $\pa \Om \setminus F$ is $\mathcal C^2-$smooth and locally pseudoconvex.

Suppose that
there exists a neighborhood of $(0,0)$ in $\R\times \C$ and a function 
$\va \in \mathcal C^{1, \al} (U') \cap W^{2,p} (U')$ where $p>3, \al \in (3/p,1),$
such that
\[
\Om \cap U=\{(z_1,z_2)\in U: x_1-\va (y_1, z_2) <0\}, \va(0,0)=\nabla \va (0,0)=0;
\]
suppose that for $(y_1,z_2) \notin F'$ ($F'$ defined from $F$ as above) $\Delta_{\tau(\varphi)} \varphi \le 0$.

Then $\Om \cap U$ is pseudoconvex. 
\end{proposition}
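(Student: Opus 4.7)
My plan is to approximate $\Omega \cap U$ from inside by smooth pseudoconvex subdomains obtained by mollifying $\varphi$, then conclude by passing to the limit. Set $\varphi_\epsilon := \varphi \ast \chi_\epsilon$ for a standard symmetric smooth mollifier $\chi_\epsilon$ on $\R^3$, working on a slightly shrunken $U'$. From $\varphi \in \mathcal C^{1,\alpha}$ one has $\|\varphi_\epsilon - \varphi\|_{L^\infty} = O(\epsilon^{1+\alpha})$ and $\|\nabla\varphi_\epsilon - \nabla\varphi\|_{L^\infty} = O(\epsilon^\alpha)$; since $D^2\varphi \in L^p$, Hölder (with $\|\chi_\epsilon\|_{L^{p'}} \asymp \epsilon^{-3/p}$ in $\R^3$) gives $\|D^2\varphi_\epsilon\|_{L^\infty} = O(\epsilon^{-3/p})$.

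The key pointwise commutator estimate is obtained by writing $\Delta_\tau = \sum_k a_k(\nabla\varphi)\partial^{(k)}$ with $a_k$ polynomial in $\nabla\varphi$, yielding
\[
\Delta_{\tau(\varphi_\epsilon)}\varphi_\epsilon(y) - \bigl[\Delta_{\tau(\varphi)}\varphi\bigr]\ast\chi_\epsilon(y) = \sum_k \int \bigl[a_k(\varphi_\epsilon)(y) - a_k(\varphi)(y')\bigr]\partial^{(k)}\varphi(y')\chi_\epsilon(y-y')\,dy'.
\]
The bracket is $O(\epsilon^\alpha)$ by Hölder regularity of $\nabla\varphi$ combined with smoothness of $a_k$, so the right-hand side is pointwise at most $C\epsilon^\alpha \cdot O(\epsilon^{-3/p}) = C\epsilon^{\alpha-3/p}$. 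Since the hypothesis gives $\Delta_{\tau(\varphi)}\varphi \le 0$ almost everywhere (on $U' \setminus F'$ pointwise, extended by $\lambda_3(F') = 0$), the mollified term is nonpositive, and
\[
\Delta_{\tau(\varphi_\epsilon)}\varphi_\epsilon \le C\epsilon^{\alpha-3/p} \longrightarrow 0 \quad \text{as } \epsilon \to 0,
\]
thanks precisely to the hypothesis $\alpha > 3/p$.

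To upgrade this almost-Levi-nonpositivity to genuine pseudoconvexity of a smooth approximating domain $\Omega_\epsilon \Subset \Omega \cap U$, I would subtract a strictly Levi-subharmonic correction: set $\tilde\varphi_\epsilon := \varphi_\epsilon - \delta_\epsilon(y_1^2+|z_2|^2) - \mu_\epsilon$ with parameters $\delta_\epsilon, \mu_\epsilon > 0$ chosen so that $\tilde\varphi_\epsilon < \varphi$ (giving $\Omega_\epsilon := \{x_1 < \tilde\varphi_\epsilon\} \cap U$ exhausting $\Omega \cap U$) while the correction's gain $-c_0\delta_\epsilon$ in $\Delta_{\tau(\tilde\varphi_\epsilon)}\tilde\varphi_\epsilon$ (with $c_0 > 0$ because $|\tau_2(\tilde\varphi_\epsilon)| \approx 1/2$ near the origin) dominates both the mollification error $O(\epsilon^{\alpha-3/p})$ and a further error arising because the subtraction's own gradient perturbs $\tau$ and is multiplied by the mollified second derivatives. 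The balancing of these three competing scales is the main technical obstacle: the $L^\infty$ blow-up $\|D^2\varphi_\epsilon\|_\infty = O(\epsilon^{-3/p})$ makes naive parameter choices fail, and closing the estimate requires exploiting both the precise margin $\alpha > 3/p$ and the local structure near the origin (where $\nabla\varphi$ vanishes so that $|\tau_1|$ is small, making several contributions to the coefficient perturbation of lower order). Once accomplished, $\Omega_\epsilon$ has smooth Levi-pseudoconvex boundary, so is pseudoconvex, and the plurisubharmonic functions $-\log\dist(\cdot, \partial\Omega_\epsilon)$ converge locally uniformly on $\Omega \cap U$ to $-\log\dist(\cdot, \partial(\Omega \cap U))$; the limit is therefore plurisubharmonic, which by Bremermann's characterization shows that $\Omega \cap U$ is pseudoconvex.
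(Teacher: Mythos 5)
Your proposal follows essentially the same route as the paper's proof: mollify $\varphi$, establish the key commutator estimate $\Delta_{\tau}(\varphi_\epsilon) - (\Delta_\tau\varphi)*\chi_\epsilon = O(\epsilon^{\alpha-3/p})$ by combining the $\mathcal C^{1,\alpha}$ modulus of continuity of $\nabla\varphi$ (governing the coefficient differences) with H\"older's inequality against $\|\chi_\epsilon\|_{L^{p'}}\asymp\epsilon^{-3/p}$ for the $L^p$ second derivatives, observe that $\lambda_3(F')=0$ lets the sign hypothesis propagate to the full mollified term, then correct by a strictly plurisubharmonic perturbation and exhaust. The paper does exactly this (Lemma \ref{convpsh}, with the perturbation $\epsilon|z|^2+\epsilon$ added to the defining function $\rho$ rather than to the implicit function $\varphi$, and with $\tau$ frozen at $\varphi$ rather than moved to $\varphi_\epsilon$ --- a cosmetic difference since the $T_{jk}$ are continuous), and it is no more explicit than you are on the balancing of the correction's strict gain against the $O(\epsilon^{-3/p})$ blow-up of $\|D^2\varphi_\epsilon\|_\infty$ times the coefficient shift induced by the correction, which you honestly flag as the remaining technical point.
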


\begin{corollary}
\label{sobobigp}
If we assume $\va \in  W^{2,p} (\mathcal V)$, $p>6$, with the same hypotheses otherwise,  $\Omega$ is pseudoconvex.
\end{corollary}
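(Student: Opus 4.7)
The plan is to combine Proposition \ref{localc2} with the Sobolev embedding theorem and the classical principle that a bounded domain locally pseudoconvex at every boundary point is globally pseudoconvex.

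First I would fix an arbitrary boundary point $a \in \pa \Om$ and, following the set-up of \eqref{localdef}, use the implicit function theorem to represent $\pa \Om$ near $a$ as a graph $x_1 = \va(y_1, z_2)$ over some open set $U' \subset \R \times \C \simeq \R^3$, with $\va(0,0)=\nabla \va(0,0)=0$. The $W^{2,p}$ regularity assumption on $\va$ will be the key input into the next step. This inheritance of Sobolev regularity through the coordinate change is where one needs to be slightly careful; since $p > 6 > 3 = \dim_\R U'$, Morrey already gives $\rho \in \mathcal C^{1,\alpha}$, so the implicit function and the associated change of variables are themselves $\mathcal C^{1,\alpha}$, and one can invoke standard composition theorems for Sobolev functions under regular changes of variable to conclude $\va \in W^{2,p}(U')$.

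Next I would invoke the Morrey-type Sobolev embedding $W^{2,p}(U') \hookrightarrow \mathcal C^{1,\,1-3/p}(U')$, valid on subsets of $\R^3$ when $p > 3$. For $p > 6$ one has $1 - 3/p > 1/2 > 3/p$, so we may fix $\alpha \in (3/p,\, 1-3/p)$ and conclude $\va \in \mathcal C^{1,\alpha}(U') \cap W^{2,p}(U')$ with $\alpha > 3/p$, which is exactly the regularity required by Proposition \ref{localc2}. The remaining hypotheses of that proposition are directly inherited from Theorem \ref{mainsobo}: $\lambda_3(F) = 0$ because $n = 2$; $\pa \Om \setminus F$ is $\mathcal C^2$-smooth and locally pseudoconvex; and the distributional inequality $\Delta_{\tau(\va)} \va \le 0$ on $U' \setminus F'$ is nothing but the reformulation \eqref{pcphi} of the Levi condition. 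Proposition \ref{localc2} therefore yields that $\Om \cap U$ is pseudoconvex for some small neighborhood $U$ of $a$.

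Finally, since $a \in \pa \Om$ was arbitrary, $\Om$ is locally pseudoconvex at every boundary point, and the classical theorem of Oka (or Behnke--Stein) that a bounded domain locally pseudoconvex at every boundary point is globally pseudoconvex completes the proof. The main obstacle, mild as it is, lies in the implicit-function step, namely verifying that the $W^{2,p}$ regularity of the ambient defining function really does descend to the graphing function $\va$ after the local change of variables; once that point is granted, everything else is a routine application of Sobolev embedding and the preceding proposition.
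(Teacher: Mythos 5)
Your argument follows the paper's: apply the Morrey embedding $W^{2,p}(U') \hookrightarrow \mathcal C^{1,1-3/p}(U')$ (valid since $U'\subset \R^3$ and $p>3$), observe that $p>6$ guarantees $1-3/p>3/p$ so an admissible $\alpha$ exists, invoke Proposition \ref{localc2} at each boundary point, and conclude by the local-to-global character of pseudoconvexity. One small correction: the corollary's hypothesis is placed directly on $\va$ rather than on the ambient defining function $\rho$, so the paragraph you devote to propagating $W^{2,p}$ regularity through the implicit function theorem is unnecessary (and in it you also misstate the dimension, since $\rho$ lives on an open subset of $\R^4$, not $\R^3$); dropping that discussion leaves exactly the paper's proof.
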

The corollary follows immediately from the Sobolev Embedding Theorem, which implies
in this instance that $ W^{2,p} \subset \mathcal C^{1, 1-\frac3p}$.

Notice that partial derivatives  $D_j\varphi$ are of the form $-D_j \rho / D_1 \rho$,
and $D_1 \rho$ is bounded from below in a neighborhood of $(0,0)$,
so when $\rho \in \mathcal C^{1,1}$, then $D_j\varphi \in \mathcal C^{0,1}$ (i.e. is Lipschitz),
and in particular, reducing $U'$ if needed, $\va \in \mathcal C^{1, \al} (U') \cap W^{2,p} (U')$, for any $\al<1$, $p>0$.
To prove Theorem \ref{mainsobo}, we cover $\partial \Omega$ with a finite number of balls $U_j=B(a_j,r_j)$, $a_j\in \partial \Omega$,
such that $\Omega \cap U_j = \{z\in U_j: \rho_1(z)<0\}$, with $\rho_1$ of the form given in \eqref{localdef}.
Since by the Proposition, all of those domains are pseudoconvex, so is $\Omega$.

\begin{proof*}{\it Proof of Proposition \ref{localc2}.}

{\it Step 1.}  Extension of the measure. 

By the regularity hypotheses on $\va$, the functions on the left hand side of \eqref{pcphi} define a (real) measure on $U$.
Since it is non-negative and since $\lambda_3(F')=0$ (bilipschitz image of $F$), this measure is actually positive on $U$.
	
\vskip.5cm 
{\it Step 2.} Regularization of the domain. 
Let $\theta \in \mathcal D(\R\times \C)$ be a smooth, non-negative, radial bump function with 
$\int \theta (y_1, z_2) d\lambda_3(y_1,z_2) =1$, supported in the unit ball.
 	 For $\de>0$ we set
$$
	\theta_\de (y_1, z_2):=\fr1{\de^3} \theta (y_1/\de, z_2/\de).
$$
For any open set $V$, let $V^\delta:=\{z\in V: \dist(z,\C^2 \setminus V)>\delta\}$.	Since $\rho$ is $C^1$ on $\mathcal V$ we deduce that:
\begin{enumerate}
\item
$\rho *\theta_\de \in \mathcal C^\infty (\mathcal V^\delta)$  and converges locally uniformly to $\rho$;
\item
All first derivatives of $\rho *\theta_\de$ converge locally uniformly to those of $\rho.$
\end{enumerate}	
Notice that in $U^\delta$,
$$
(\rho *\theta_\de) (z_1,z_2)=x_1- (\va * \theta_\de ) (y_1,z_2).
$$	
As a consequence, $|\nabla (\rho *\theta_\de)|$ is bounded below in a neighborhood of $\partial \Omega$,
for $\de$ small enough.

For $0< \delta \ll \eps$, define 
\begin{equation}
\label{omeps}
\Om_{\ve, \de}:= \{\rho_{\de, \ve} (z):= (\rho*\theta_\de)(z) +\ve |z|^2 +\ve <0 \} .
\end{equation}
For $\eps$ and $\de\ll \eps$ both small enough, $\Om_{\ve, \de}$ is a smoothly bounded domain  and $\Om_{\ve, \de} \subset \{\rho <0\}$.

Now we consider the smaller domain $\Omega^U:= \Omega \cap U$ and define $\Omega^U_{\ve, \de}$ as above.
For any $r'<r$ so that $$B(0,r') \Subset B(0,r)= U,  \delta<r-r',$$
we get $$\Omega^U_{\ve, \de} \cap B(0,r')= \Om_{\ve, \de} \cap B(0,r').$$
 Since balls are obviously 
pseudoconvex, it is enough to see that $\Om_{\ve_j, \de_j}$ is locally pseudoconvex (for 
sequences $\ve_j, \de_j$ going to $0$) to obtain that $\Omega^U$ is.

\vskip.5cm
{\it Step 3.} The domain $\Om_{\ve, \de}$ is pseudoconvex.
\begin{lemma}
\label{convpsh}
Let $v\in \mathcal C^{1, \al} (U') \cap W^{2,p} (U')$ where $p>3, \al \in (3/p,1)$. 
Suppose that $- \Delta_{\tau(\varphi)} v $ is a positive measure.
Then
for any $\eps>0$, there exists $\delta(\eps)$ such that for $\delta<\delta(\eps)$ we have 
$$- \Delta_{\tau(\varphi)} (v * \theta_\de )  \ge - \eps.$$
\end{lemma}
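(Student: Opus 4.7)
The plan is to compare, pointwise on the $\de$-interior of $U'$, the quantity $-\Delta_{\tau(\va)}(v*\theta_\de)$ with the non-negative smooth function $\mu * \theta_\de$, where $\mu:=-\Delta_{\tau(\va)} v$ is the hypothesized positive measure. Because $v\in W^{2,p}(U')$ the distributional second derivatives $\pa^2 v$ are genuine $L^p$ functions, and since the coefficients $\tau(\va)$ are continuous (built from $\nabla \va$), $\mu$ is represented by a non-negative $L^p$ function, so $\mu*\theta_\de$ is a non-negative smooth function. The only obstruction to the two quantities being equal is that convolution commutes with the second-order derivatives acting on $v$, but not with the variable coefficients $|\tau_1(\va)|^2$, $i\tau_1(\va)\ov{\tau_2(\va)}$, $|\tau_2(\va)|^2$: these are evaluated at $y$ on the left but at $y-w$ inside the convolution integral on the right. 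Writing out both sides one obtains
\[
-\Delta_{\tau(\va)}(v*\theta_\de)(y) - (\mu * \theta_\de)(y)
= \int_{|w|<\de} R(y,w)\, \theta_\de(w)\, d\lambda_3(w),
\]
where $R(y,w)$ is a sum of three terms of the shape $[A(y-w)-A(y)]\,\pa^2 v(y-w)$, with $A$ running through the three coefficients above.

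Next I would exploit the H\"older regularity of the coefficients, inherited from $\va\in \mathcal C^{1,\al}(U')$: the first derivatives of $\va$ are $\al$-H\"older continuous, hence so is each $A$ (a product and modulus of bounded $\al$-H\"older continuous functions). Thus $|A(y-w)-A(y)|\le C|w|^\al \le C\de^\al$ on $\Supp \theta_\de$, yielding the pointwise bound
\[
\bigl|-\Delta_{\tau(\va)}(v*\theta_\de)(y) - (\mu * \theta_\de)(y)\bigr| \le C\de^\al\, \bigl(|\pa^2 v|*\theta_\de\bigr)(y),
\]
where $|\pa^2 v|$ stands for the sum of the three relevant second-derivative moduli.

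To conclude, I would apply Young's convolution inequality with conjugate exponents $p$ and $p'=p/(p-1)$, giving $\||\pa^2 v|*\theta_\de\|_{L^\infty}\le \|\pa^2 v\|_{L^p}\,\|\theta_\de\|_{L^{p'}}$. A direct rescaling $w\mapsto w/\de$ in three real dimensions yields $\|\theta_\de\|_{L^{p'}}=\de^{-3/p}\|\theta\|_{L^{p'}}$. Combining this with the non-negativity $\mu*\theta_\de\ge 0$ (positive measure against a non-negative kernel) produces
\[
-\Delta_{\tau(\va)}(v*\theta_\de)(y) \ge (\mu*\theta_\de)(y) - C\de^{\al-3/p}\|\pa^2 v\|_{L^p} \ge -C\de^{\al-3/p}\|\pa^2 v\|_{L^p},
\]
and since $\al>3/p$ by hypothesis the exponent is strictly positive, so one selects $\de(\eps)$ with $C\de^{\al-3/p}\|\pa^2 v\|_{L^p}<\eps$.

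The principal obstacle is thus balancing the H\"older-smoothness gain $\de^\al$ against the $\de^{-3/p}$ blow-up of the mollifier in $L^{p'}$; the assumption $\al>3/p$ in the statement is tailored precisely to overcome this, and the requirement $p>3$ (strictly larger than the real dimension of $U'$) is what makes the range $(3/p,1)$ non-empty. Beyond this quantitative commutator estimate no deep idea is needed: the essential move is to convert the distributional hypothesis ``$-\Delta_{\tau(\va)} v \ge 0$ as a measure'' into the genuinely non-negative smooth function $\mu*\theta_\de$, which provides the positive reservoir that dominates the $O(\de^{\al-3/p})$ error.
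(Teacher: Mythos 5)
Your argument is correct and is essentially identical to the paper's: you split $-\Delta_{\tau(\va)}(v*\theta_\de)$ into the nonnegative piece $(-\Delta_{\tau(\va)}v)*\theta_\de$ plus a commutator term, bound the coefficient increments by $C\de^\al$ using $\mathcal C^{1,\al}$ regularity of $\va$, and control the remaining integral by H\"older with $\|\theta_\de\|_{L^{p'}}\lesssim\de^{-3/p}$, yielding the error $O(\de^{\al-3/p})$. (You even implicitly correct the paper's typo $p'=\tfrac{p-1}{p}$ to the genuine conjugate exponent $p'=\tfrac{p}{p-1}$.)
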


\begin{proof}
The positivity hypothesis implies that $- (\Delta_{\tau(\varphi)} v ) * \theta_\de \ge 0$ as a smooth function.

To make computations more readable, let us denote $$\xi := (\xi_1, \xi_2, \xi_3):= (y_1, \Re z_2, \Im z_2).$$
Then we can write 
\[
\Delta_{\tau(\varphi)} v = \sum_{1\le j \le k \le 3} T_{jk} (\varphi)(\xi) \frac{\partial^2 v}{\partial \xi_j \partial \xi_k}(\xi),
\]
with 
$$\begin{aligned} 
T_{11} &= |\tau_1|^2,\\ 
T_{22} &= T_{33} = \frac14 |\tau_2|^2,\\
T_{12} &= \Im(\bar \tau_1 \tau_2),\\ 
T_{13} &= -\Re(\bar \tau_1 \tau_2),\\
T_{23}&=0,
\end{aligned}$$ 
(here we omit the argument $\varphi$ for the sake of brevity).

Observe that
\begin{multline*}
\Delta_{\tau(\varphi)} (v * \theta_\de )  (y_1,z_2) = \sum_{1\le j \le k \le 3} T_{jk} (\varphi)(\xi)
\frac{\partial^2 }{\partial \xi_j \partial \xi_k}(v * \theta_\de)(\xi)
\\
= \sum_{1\le j \le k \le 3} T_{jk} (\varphi)(\xi) 
\frac{\partial^2 }{\partial \xi_j \partial \xi_k}\int_{U'} v(\xi-\eta) \theta_\delta (\eta)  d\lambda_3(\eta),
\end{multline*}
where $\eta=(\eta_1,\eta_2,\eta_3)$. Differentiating under the integral sign, this equals
\begin{multline*}
= \int_{U'} \Delta_{\tau(\varphi)} v(\xi-\eta) \theta_\delta (\eta)  d\lambda_3(\eta)
\\
+ \int_{U'} \sum_{1\le j \le k \le 3} \left( T_{jk} (\varphi)(\xi) - T_{jk} (\varphi)(\xi-\eta) \right) 
\frac{\partial^2 v}{\partial \xi_j \partial \xi_k}(\xi-\eta) \theta_\delta (\eta)  d\lambda_3(\eta).
\end{multline*}
Since $\varphi \in \mathcal C^{1, \al} (U')$ and $|\eta|\le \delta$ in $\mbox{Supp}(\theta_\delta)$, 
the terms involving the $T_{jk} (\varphi)$ are uniformly bounded 
by $C \delta^{\alpha}$. The second partials of $v$ belong to $L^p(U)$.  Finally, if we let $p'=\frac{p}{p-1}$ be
the conjugate exponent of $p$, $\|\theta_\delta\|_{p'} \lesssim \delta^{-3/p}$, so applying H\"older's inequality,
the second term of our last expression is bounded in modulus by 
\[
C(\varphi) \|v\|_{W^{2,p}}\delta^{\alpha-\frac3p}.
\]
This tends to $0$ as $\delta\to0$; and the first term is nonnegative.
\end{proof}
\n
Applying the lemma to $v=\varphi$, for a finite family of neighborhoods $U$ covering $\partial \Omega$,
we see that the defining function $\rho_{\de, \ve}$ verifies \eqref{diffpc} for $\delta \lesssim \eps^{1/(\alpha-\frac3p)}$.

\vskip.5cm
{\it Step 4.} The domain $\Om^U$ is pseudoconvex.

	Given a compact subset $K$ of $\Om$
	we can find $\ve>\de>0$ such that 
$$K \subset \Om_{\ve, \de} \subset \Omega.$$

	
By Step 3, we can find $\ve_j, \de_j>0$ so small such that 
$\Om_j:=\Om_{\de_{j}, \ve_j}$ is a sequence of pseudoconvex domains with smooth boundary contained in $\Om^U$ such that every compact of $\Om^U$ 
is included in all but a finite number of the $\Om_j.$ Then the continuous function  $-\log \de_{\Om^U}$
 is the pointwise limit of a sequence of the plurisubharmonic functions
  $-\log \de_{\Om_j}$, which are locally uniformly bounded from above, so it must be plurisubharmonic as well. Hence $\Om^U$ is pseudoconvex. 
\end{proof*}

\section{The higher dimensional case}
\label{n3}

We now prove Theorem \ref{mainsobo} in any dimension. We proceed by contradiction. Suppose $\Omega$
is not pseudoconvex. 
Then there exists an affine $2$-complex-dimensional subspace $P$ such that
$\Omega\cap P$ is not pseudoconvex as a subset of $\C^2$ \cite{Hi}, see also \cite{Jb}. Make an affine change of coordinates
such that 
$P=\C^2\times\{0\}$.

We use \cite[Lemma 6]{NT}, itself a variation on \cite[Theorem 4.1.25]{Ho}:

\begin{lemma}
\label{betterHo}
If $D\subset \C^2$ is a bounded non pseudoconvex domain, there exists $p\in \partial D$, $V$ a neighborhood of $p$,
and $\Phi$ a biholomorphism on $V$ such that $\Phi(p)=0$,
$\Phi^{-1}(\overline {\D^2}) \subset V$ and 
 $\Phi^{-1}(G_2)  \subset D$, where 
 $G_2:= \{ z \in \D^2: \Re z_1 < |z_2|^2\}$.
\end{lemma}

Apply the lemma to $D=\Omega\cap (\C^2\times\{0\})$. 
We then consider the biholomorphism given by
$\Phi_1(z):=(\Phi(z_1,z_2),z_3,\dots,z_n)$ on the open set
$V\times \C^{n-2}$, and the open set 
$\Omega_1:= \Omega\cap  (V\times \C^{n-2})$. Notice that $\Phi_1(\Omega_1)\cap (\C^2\times\{0\}) \supset  G_2\times\{0\}$,
and $0\in \partial \Phi_1(\Omega_1)$.
From now on, we denote by $\Omega$ the open set $\Phi_1(\Omega_1)$.

Using a small enough neighborhood $W$ of $0$, 
letting $z':= (z_2,  \dots, z_n)$ and 
$$W':=\{ (y_1, z'): \exists x_1, (x_1+iy_1,z')\in W\},$$
we then have a function $\va \in \mathcal C^{1,1} (W')$
such that
$$\Omega \cap W = \left\{ (z_1,z')\in W: x_1 < \va(y_1,z') \right\},$$
and the conditions above imply that  
\begin{equation}
\label{phige}
\va(0,0)=0, \quad \nabla\va(0,0)=0, \quad
\mbox{ and } \va (0,z_2, 0, \dots, 0) \ge |z_2|^2.
\end{equation}

In order to exploit our zero-measure hypothesis,
we need to take an appropriate two-dimensional slice of $\Omega$.
For $t\in \C^{n -2}$, let $$P(z_1,z_2, t):=p_t(z_1,z_2):= (z_1,z_2, z_2 t).$$
Obviously, $P$ is a 
diffeomorphism on $\C^n\setminus \{z_2=0\}$. Let 
$$\Omega^t:= p_t^{-1}(\Omega) \subset \C^2.
$$
Observe that when $|z_1|, |z_2|,\|t\|$ are small enough, 
and $p_t(z_1,z_2)\in \partial \Omega$, we have 
$$D(p_t)(z_1,z_2) (1,0) \notin T_{p_t(z_1,z_2)} (\partial \Omega).$$
Hence $p_t(\C^2)$ is transverse to $\partial \Omega$. As a consequence, $\partial \Omega^t$ is globally $\mathcal C^{1,1}$-smooth.
Also, use $(y_1,z_2, t)$ as local coordinates on $W\cap \partial \Omega$, we see that $P|_{P^{-1}(\partial \Omega \setminus \{z_2=0\})}$
provides a local diffeomorphism to $\partial \Omega \setminus \{z_2=0\}$. Let 
$$
F^t:= p_t^{-1} (F) \subset \partial \Omega^t, W^t:= p_t^{-1} (W).
$$
 Since $\lambda_{2n-1}(F)=0$, for almost all $t$, we must have $\lambda_3 (F^t \setminus \{z_2=0\})=0$, therefore $\lambda_3 (F^t )=0$.  Choose such a $t$ close enough to $0$: then $\partial \Omega^t$ is $\mathcal C^2$-smooth and pseudoconvex
outside the set $F^t$, which is of measure $0$.

We can then apply the result of Section \ref{n2} to $F^t$ and conclude that $\Omega^t \cap W$ is pseudoconvex. This will lead
to a contradiction. Observe that the defining function for $\Omega^t$ is given by 
$$x_1-\varphi( p_t(y_1,z_2) )=:x_1- \varphi^t(y_1,z_2).$$ 
So using \eqref{phige}, we get 
$$\begin{aligned} 
\varphi^t(0,z_2)&= \varphi (0,z_2, z_2 t) -  \varphi (0,z_2, 0) +  \varphi (0,z_2, 0)\\
&\ge  - C \|t\|^2 |z_2|^2 + |z_2|^2,
\end{aligned}$$
where $C$ depends on the uniform bound for the second derivatives of $\varphi$, which are defined a.e. and bounded.  
Choosing $t$ close enough to $0$, this is bounded below by $c_2|z_2|^2$ with 
$c_2>0$.

Recall that Green's formula applied with a smooth function $u$ and $v(\zeta)= \log \frac{r}{|\zeta|}$ yields
\[
\frac1r \int_{\partial D(0,r)} u (\zeta ) d\lambda_1(\zeta) = u(0) + \int_{D(0,r)} \log \frac{r}{|\zeta|} \Delta u (\zeta) d\lambda_2(\zeta),
\]
see e.g. \cite[(3.1.8)']{Ho}. Approximating by smooth functions, this goes over to $u \in \mathcal C^{1,1}$. 
Since $\va^t(0,0)=0$, by applying the above formula to the function $\xi \to \va^t (0,\xi)$, we obtain
for $r>0$ small enough,
$$
\begin{aligned}
c_2 r^2 &\le \frac1{2\pi} \int_0^{2\pi} \va^t (0,r e^{i\theta}) d\theta\\ 
&= \frac2{\pi} \int_{D(0,r)} \log \frac{r}{|\zeta|} \frac{\partial^2 \va^t}{\partial z_2 \partial \bar z_2}(0, \zeta) d\lambda_2(\zeta).
\end{aligned}
$$
On the other hand, using pseudoconvexity of $\Omega^t,$ we get
$$\Delta_{\tau(\va^t)}\va^t \le 0.$$ 
But,
since $\nabla \varphi (0)=0$ and the first partial derivatives of $\va$ are Lipschitz, we deduce from \eqref{pcphi} that
\begin{equation*}
4 \Delta_{\tau(\va^t)}\va^t (0,z_2)=  \frac{\partial^2 \va^t}{\partial z_2 \partial \bar z_2} (0,z_2) + O ( |z_2|),
\end{equation*}
so that 
\[
c_2 \le \limsup_{r\to0} \frac2{\pi r^2} \int_{D(0,r)} \log \frac{r}{|\zeta|} \frac{\partial^2 \va^t}{\partial z_2 \partial \bar z_2}(0, \zeta) d\lambda_2(\zeta)  \le 0,
\]
and we get a contradiction.

\section{Examples}

In this section, we prove Proposition \ref{c11nonpc} and give a few other examples to show what type of further results could be expected.

\subsection{Proof of Proposition \ref{c11nonpc}.}
\ {}

This argument is inspired from \cite{NPTZ}, in fact it slightly improves Lemma 2.3 in that paper. 
We construct an appropriate function on the interval $[0,1]$.  This technical Lemma will be proved later.

\begin{lemma}
\label{fatstair}
For any $L>0$, there exists a function $F \in \mathcal C^{1,1}(\R)$, supported on $[0,1]$, 
with $\|F\|_\infty \le 1$ such that:
\begin{itemize}
\item
 there exists a point $x_0 \in (0,\frac12)$ and $\delta>0$ such that, for $|t|<\delta$, $F(x_0+t) \ge F(x_0) + tF'(x_0) + Lt^2$;
\item
there exists a dense open set $U\subset [0,1]$ such that $F$ is of class $\mathcal C^2$ and strictly concave on 
each connected component of $U$.
\end{itemize}
\end{lemma}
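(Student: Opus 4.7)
The plan is to build $F$ by specifying its distributional second derivative $g := F''$ as a bounded step function that equals a large positive constant $M$ on a fat Cantor set $K \subset (0, \tfrac12)$ and equals strictly negative constants on the complementary intervals of $[0,1]$. Integrating twice from $0$ with initial data $F(0) = F'(0) = 0$ produces $F' \in \mathrm{Lip}$, hence $F \in \mathcal{C}^{1,1}$. Strict negativity of $g$ on each component of $U := [0,1] \setminus K$ gives strict concavity of $F$ there (and $U$ is dense because $K$ has empty interior), while the Lebesgue density-$1$ property of $K$ at a chosen point $x_0 \in K$ supplies the quantitative convex bulge.

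More precisely, fix $M > 0$ large and $r > 0$ small, both depending on $L$, and construct a symmetric fat Cantor set $K \subset [\tfrac14 - r, \tfrac14 + r]$ of measure $(1 - \eta) \cdot 2r$ with $\eta$ small. By the Lebesgue density theorem, $K$ contains a density-$1$ point $x_0$, which then lies in $(0, \tfrac12)$. Let $\{(a_j, b_j)\}$ be the gaps of $K$ inside $[\tfrac14 - r, \tfrac14 + r]$, and define
\[
g := M \cdot \mathbf{1}_K - N \cdot \sum_j \mathbf{1}_{(a_j, b_j)} - N' \cdot \mathbf{1}_{[0, \frac14 - r] \cup [\frac14 + r, 1]}.
\]
Let $F'(x) := \int_0^x g(s)\, ds$ and $F(x) := \int_0^x F'(s)\, ds$; the two scalar conditions $F'(1) = 0$ (i.e.\ $\int_0^1 g = 0$) and $F(1) = 0$ form a $2 \times 2$ linear system in $(N, N')$ admitting a unique positive solution of order $M$. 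Extending $F \equiv 0$ outside $[0,1]$ yields a $\mathcal{C}^{1,1}$ function supported on $[0,1]$; on each component of $U$, $F''$ is a strictly negative constant, so $F$ is a downward-opening parabola --- hence $\mathcal{C}^\infty$ and strictly concave there.

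For the convex bulge at $x_0$, given any $\varepsilon > 0$, the density-$1$ property supplies $\delta > 0$ with $\lambda_1(K \cap [x_0, x_0 + t]) \ge (1 - \varepsilon)|t|$ for all $|t| < \delta$, whence
\[
F'(x_0 + t) - F'(x_0) = M\, \lambda_1(K \cap [x_0, x_0 + t]) - N\, \lambda_1(U \cap [x_0, x_0 + t]) \ge \tfrac{M}{2}\, t
\]
for $0 < t < \delta$ (with the symmetric sign for $t < 0$, since $x_0$ is a two-sided density point). Integrating in $t$ yields $F(x_0 + t) - F(x_0) - t F'(x_0) \ge \tfrac{M}{4}\, t^2$.

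The main obstacle is the joint calibration of $(M, r, N, N')$ ensuring simultaneously $\|F\|_\infty \le 1$ and bulge coefficient $\ge L$. Both $\|F\|_\infty$ and the bulge coefficient scale as $M r^2$, so $r \sim 1/\sqrt{M}$ keeps $\|F\|_\infty$ bounded independently of $M$ while the bulge coefficient remains of order $M$; normalizing $F \mapsto F/\|F\|_\infty$ then enforces the sup-norm bound and leaves a bulge coefficient proportional to $M$, which is $\ge L$ for $M$ large enough. The delicate points are verifying that $(N, N')$ is positive of the right order of magnitude, and controlling $\|F\|_\infty$ explicitly; should the piecewise-constant ansatz for $g$ on the outer intervals $[0, \tfrac14 - r]$ and $[\tfrac14 + r, 1]$ prove too rigid to satisfy all requirements at once, one replaces the outer constant $-N'$ by a suitable smooth strictly negative profile --- preserving strict concavity on those components --- to gain the extra degrees of freedom needed to match $F(1) = F'(1) = 0$.
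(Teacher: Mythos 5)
Your proposal is a genuinely different route from the paper's. The paper first builds a ``devil's staircase'' $f$ (an increasing Lipschitz function constant on the gaps of a fat Cantor set, with $\int_0^1(f(t)-t)\,dt=0$ automatically) and sets $F(x)=\int_0^x(f(t)-t)\,dt$; compact support, $\|F\|_\infty\le 1$, and strict concavity off the Cantor set all fall out with no calibration, and the bulge point $x_0$ is found not by a density argument but by taking a global minimum of $f(x)-(1-\alpha_1)^{-1}x$ on the first-generation interval. You instead prescribe $F''$ directly as a step function and try to solve boundary conditions for the magnitudes of the negative pieces. That is a legitimate alternative idea, but as written it has a real gap.

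The gap is in the calibration, and the central scaling claim is wrong. The bulge coefficient your density argument produces is $M/4$, with no $r$-dependence, not $Mr^2$. Meanwhile $\|F\|_\infty$ does \emph{not} scale like $Mr^2$: balancing $\int_0^1 g=0$ with $K$ of length $\sim r$ forces total negative mass $\sim Mr$, and that mass is spread over an order-one interval, so $|F'|$ reaches size $\sim Mr$ and $|F|$ size $\sim Mr$. (With $r\sim 1/\sqrt M$, as you propose, this already gives $\|F\|_\infty\sim\sqrt M\to\infty$, and after normalization the bulge coefficient becomes $\sim\sqrt M$, not $\sim M$; the statement that ``both scale as $Mr^2$'' is also internally inconsistent, since under that scaling the bulge coefficient would be bounded, not of order $M$.) The correct scaling is closer to $r\sim 1/M$, and with that the approach can likely be made to work, but you would still need to verify that the $2\times 2$ linear system in $(N,N')$ has a positive solution of the right size --- which is not automatic: in the natural regime $r\to 0$, $\eta\to 0$ the system degenerates (it tends to force $N'\approx 0$ and $N\approx M/\eta$), and the two boundary constraints $F'(1)=0$ and $F(1)=0$ are not obviously compatible with the fixed geometry of $K$ in $[\tfrac14-r,\tfrac14+r]$ without an extra free parameter. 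You acknowledge this (``should the ansatz prove too rigid\ldots''), but that is precisely the step that needs to be carried out for the proof to be complete. The paper's integration-of-a-staircase construction avoids all of this: mean-zero is built in, $\|F\|_\infty\le 1$ is a one-line estimate, and $L$ is controlled simply by $\alpha_1\to 1$.
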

Accepting the lemma for the moment, we define $\Omega$ as a Hartogs domain over the unit disc, 
\[
\Omega_\varphi:= \{(z,w)\in \C^2: |z|<1, \log |w| < \varphi(z)\}.
\]
Recall that $\Omega$ is pseudoconvex if and only if $-\varphi$ is subharmonic.  
When $\varphi(z)= \frac12 \log (1-|z|^2)$, $\Omega_\varphi$ is just the unit ball. 

Now define $$\Phi(z)=\Phi (x+iy) = F(x-\frac12) \chi (4y),$$ where $F$
is the function obtained in Lemma \ref{fatstair}, $L$ to be chosen, and $\chi$ is a smooth,
even cut-off function on $\R$ such that $0\le \chi \le 1$,
$\mbox{supp}\, \chi \subset (-2,2)$, and $\chi\equiv 1$ on $[-1,1]$.

Let $\Omega:= \Omega_\varphi$, with 
\[
\varphi(z)= \frac12 \log (1-|z|^2) + c_1 \Phi(z),
\]
$c_1$ to be chosen. Note that in a neighborhood of $\{ |z|= 1, w=0\}$ the boundary of $\Omega$ coincides with that
of the unit ball, so is $\mathcal C^\infty$-smooth.  When $|z|<1$, it has the
regularity of $\varphi$; on $[-\frac12;\frac 12]^2$, it has at worst the regularity of $F$,
and outside of that square, it is again $\mathcal C^\infty$-smooth.

Notice that 
$$\Phi(z)=0 \Rightarrow \Delta \varphi (z)=  -2 (1-|z|^2)^{-2}\le -2.$$ So the strict pseudoconvexity of $\Om$ 
near $z$ where $\Phi(z)=0$ is guaranteed.
In general, 
$$\begin{aligned}
\Delta \varphi (z)&=  -2 (1-|z|^2)^{-2} + c_1 F''(x-\frac12) \chi (4y) + 16 c_1 F(x-\frac12) \chi'' (4y)
\\
&\le c_1 F''(x-\frac12) \chi (4y) -1,
\end{aligned}$$   
if we take $c_1 < \frac1{16}\| \chi''\|_\infty$.  The last quantity is negative when $x-\frac12\in U$
and also when $|x| \ge \frac12$ (because then $F(x-\frac12)=0$), so on a dense subset of $\{|z|<1\}$.   

Finally, at the point $z_0:=x_0-\frac12$, $\varphi$ does not admit second derivatives, but taking $\delta<\frac1{10}$, 
we see that for $|\zeta |<\delta$,  
\[
\varphi (z_0+\zeta) \ge    \varphi (z_0) + D\varphi(z_0)\cdot \zeta + \frac12 c_1 L (|\zeta|^2 + \Re (\zeta^2)) - c_3 |\zeta|^2,
\] 
where $c_3>0$ is an absolute constant depending on the  $\log(1-|z|^2)$ term in the function.  Taking
$L$ large enough, we see that $\varphi$ cannot be superharmonic in any neighborhood of the point $z_0$, so $\Omega_\varphi$
is not pseudoconvex.                                                                                              

\begin{proof*}{\it Proof of Lemma \ref{fatstair}.}

We shall construct closed intervals $\{I_{n,i}, J_{n,i}, n \ge 0, 1\le i \le 2^n\}$, such
that $J_{n,i} \subsetneq \mathring{I}_{n,i} $, and
$$E_0:= \bigcap_{n\ge 0} \bigcup_{1\le i \le 2^n} I_{n,i}$$ is a Cantor set (thus totally disconnected), but of positive measure.  
We also construct functions $f_j$ tending to $f$, a non-constant increasing Lipschitz function, constant on each $J_{n,i}$
(a ``devil's staircase'' type function, the growth of which is achieved only on $E_0$).

Let $(\alpha_n)_{n\ge 1} \subset (0,1)$
be a decreasing sequence of positive numbers, such that $$\sum_{n=1}^\infty \alpha_n < \infty.$$

Set 
$$I_{0,1}:= [0,1], J_{0,1}:=[\frac{1-\alpha_1}2, \frac{1+\alpha_1}2], f_0(x):=x$$ 
Then 
$$I_{1,1}:= [0, \frac{1-\alpha_1}2], I_{1,2}:= [\frac{1+\alpha_1}2,1],$$ and $f_1$ is the piecewise affine function given by
 $$f_1(x)=\begin{cases}  \frac12 & x\in J_{0,1},\\
  0  &x=0\\
   1 &x=1\\ 
  \text{affine} &  \text{on each}\ I_{1,i}.
\end{cases}$$
In general, if the intervals $ I_{n,i}=:[a_{n,i},b_{n,i}]$ are known, 
denoting by $|J|$  the length of an interval $J$, and by
$c_{n,i}:= \frac12 (a_{n,i}+b_{n,i})$ the center of $I_{n,i}$, we put 
\[ 
J_{n,i}:= \left[c_{n,i}-\frac12 \alpha_{n+1} |I_{n,i}|, c_{n,i}+\frac12 \alpha_{n+1} |I_{n,i}|\right], 
\]
and denote respectively by $I_{n+1, 2i-1}$ and $I_{n+1, 2i}$ the first and second connected component of 
$I_{n,i}\setminus \mathring{J_{n,i}}$. Then $$|I_{n+1, 2i-1}|= |I_{n+1, 2i}|= \frac12 (1- \alpha_{n+1}) |I_{n,i}|,$$
 which
implies 
$$|I_{n,i}|= 2^{-n}\prod_{i=1}^n (1- \alpha_{n}), \ \forall i \ge 1.$$ 
Given $f_n$, affine increasing on $I_{n,i}$, we obtain $f_{n+1}$ by setting $f_{n+1}(x)= f_n(c_{n,i})$ for all $x\in J_{n,i}$,
and making it continuous and affine on the intervals  $I_{n+1, 2i-1}$ and $I_{n+1, 2i}$.  
An immediate induction shows that
$$f_n(b_{n,i})-f_n(a_{n,i})=2^{-n},$$ 
so that $\|f_n-f_{n+1}\|_\infty \le 2^{-n}$ and the sequence converges 
uniformly to a continuous limit $f$.
On $I_{n,i}$ we have 
$$f_n'(x)= \prod_{i=1}^n (1- \alpha_{n})^{-1}=: \ell_n.$$
Thus $f_n$ is increasing, with 
$$0\le f_n(y)-f_n(x)\le \ell_n(y-x), \  \forall y\ge x.$$ 
Passing to the limit, the convergence condition
means that 
\begin{equation}
\label{flip}
|f(y)-f(x)| \le  |y-x| \prod_{i=1}^\infty (1- \alpha_{n})^{-1} .
\end{equation}
 It also implies
that 
\[
|E_0| = \lim_{n\to\infty} \left| \bigcup_{1\le i \le 2^n} I_{n,i} \right| = \prod_{i=1}^\infty (1- \alpha_{n}) >0.
\]
Note that $$U:= [0,1]\setminus E_0 =  \bigcup_{n\ge 0} \bigcup_{1\le i \le 2^n} \mathring{J_{n,i}}$$ is a dense open subset of $[0,1]$.

We set 
$$
F(x) := \int_0^x (f(t)-t) \chi_{[0,1]}(t) dt.
$$
The integrand is continuous (because it vanishes at $0$ and $1$), and of
Lipschitz class because of \eqref{flip}.
Notice that the construction of $f_n$ implies that 
$$\int_0^1 (f_{n+1}(t)-f_n(t)) dt=0.$$
The function $F$ enjoys the following properties:

\n 
(i) $F\in \mathcal C^{1,1}(\R)$ and $F(x)=0$ whenever $x\le 0$ or $x\ge 1;$

\n 
(ii) For any $(n,i)$,  $F\in \mathcal C^{2}(\mathring{J_{n,i}})$,
and $F''(x)=-1$ there.  

To exhibit a point $x_0$,  on the interval $I_{1,1}$,
consider the function $$g(x):= f(x)- f_1(x)= f(x)- (1- \alpha_{1})^{-1} x.$$ Then $$g(0)=g( \frac{1-\alpha_1}2)=0.$$
This yields
 $$g(x)= f_2(x)-f_1(x) \ \forall x \in J_{1,1}.$$ 
 Hence $g$ takes on some negative values, thus must attain an absolute
minimum at a point $x_0\in \mathring{I_{1,1}}$. Therefore for $x\in I_{1,1}$,
\[
f(x)- (1- \alpha_{1})^{-1} x \ge f(x_0) - (1- \alpha_{1})^{-1} x_0,
\]
so integrating 
\[
F(x_0+s) \ge F(x_0) + s (f(x_0)-x_0) + \frac12 [(1- \alpha_{1})^{-1}-1] s^2,
\]
and we can make $L$ as large as we wish by choosing $\alpha_1$ close to $1$.
\end{proof*}

\subsection{Zygmund regularity and Hausdorff dimension.}
\label{zyg}
\ {}

It has been known since \cite{Ca} that for $0<\alpha<1$,
 a set $E$ in the plane is removable for harmonic functions of class $\Lambda^\alpha= \mathcal C^{0,\alpha}$, if and only if 
$H^\alpha(E)=0$, where $H^\alpha$ stands for Hausdorff measure in dimension $\alpha$. This extends to $\alpha \in [1,2)$
in the following way (see e.g. \cite{Ab}): for an open set $U\subset \R^m$, 
define the Zygmund class as
\begin{multline*}
Z_\alpha (U):= 
\left\{ u\in \mathcal C^0 (U): \exists M<\infty: \right.
\\
\left. \forall x, x+h, x-h \in U,
|u(x+h)+u(x-h)-2u(x)| \le M \|h\|^\alpha \right\}.
\end{multline*}
 Note that for $0<\alpha<1$, $Z_\alpha= \mathcal C^{0,\alpha}$,
for $1<\alpha<2$, $Z_\alpha= \mathcal C^{1,\alpha-1}$.
Then a set $E\Subset U \subset \C$ in the plane is removable for subharmonic functions of class $Z_\alpha(U)$, if and only if 
$H^\alpha(E)=0$.

We will exploit
this to construct a family of examples proving the following.

\begin{proposition}
If $\Omega\subset \C^2$ and $\partial \Omega$ is only assumed $Z_\alpha$-smooth, with $ 0 <\alpha<2$, then the condition $H^{1+\alpha}(F)<\infty$,
for $F$ closed, $F\subset \partial \Omega$,
is not sufficient to imply that $F$ is removable for pseudoconvexity.
\end{proposition}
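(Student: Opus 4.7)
The plan is to adapt the Hartogs domain construction of Proposition \ref{c11nonpc}. For each $\al\in(0,2)$, the Zygmund-class extension of Carleson's theorem recalled just above produces a compact set $E\subset\C$ with $0<H^\al(E)<\infty$ that is not removable for subharmonic functions in $Z_\al$; concretely, one obtains $u\in Z_\al(\C)$ supported in $B(0,\fr12)$, $\mathcal C^\infty$ and subharmonic on $\C\setminus E$ (for example, a suitably cut-off logarithmic potential of a signed Frostman-type measure on $E$), whose distributional Laplacian carries a nontrivial negative singular part supported on $E$.

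Set $\va(z):=\fr12\log(1-|z|^2)-cu(z)$ on the unit disc for a small constant $c>0$ and define
\[
\Om:=\Om_\va=\{(z,w)\in\C^2:|z|<1,\ \log|w|<\va(z)\},\qquad F:=\{(z,w)\in\pa\Om:z\in E\}.
\]
Paralleling the proof of Proposition \ref{c11nonpc}: (i) $\pa\Om$ is globally $Z_\al$-smooth and $\mathcal C^\infty$-smooth off $F$, since $u$ is $\mathcal C^\infty$ off $E$ and supported strictly inside the disc, while near $|z|=1$ the boundary coincides with the unit sphere; (ii) local pseudoconvexity of $\pa\Om$ at a point $(z_0,w_0)$ with $w_0\neq 0$ is equivalent to subharmonicity of $-\va$ near $z_0$, and on $\{|z|<1\}\setminus E$ the function $-\va=-\fr12\log(1-|z|^2)+cu$ is a sum of two subharmonic functions; (iii) the smooth bounded positive term $\fr2{(1-|z|^2)^2}=\Delta(-\fr12\log(1-|z|^2))$ cannot absorb the singular negative part of $c\Delta u$ supported on the Lebesgue-null set $E$, so $-\va$ fails to be subharmonic on the disc and $\Om$ is not pseudoconvex.

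The main obstacle is verifying the Hausdorff bound $H^{1+\al}(F)<\infty$. Using the parametrisation $\psi:E\times S^1\to F$, $(z,\theta)\mapsto(z,e^{\va(z)}e^{i\theta})$, the product estimate
\[
H^{1+\al}(F)\le C\,H^\al(E)\,H^1(S^1)<\infty
\]
holds as soon as $\psi$ is bi-Lipschitz, which is guaranteed whenever $\va$ is Lipschitz, and hence certainly for $\al>1$. For smaller $\al$ the map $\psi$ is only H\"older in the $z$-variable, and could a priori inflate Hausdorff dimension; one then refines the construction of $u$ (for instance by placing $E$ on a Lipschitz arc and selecting the measure $\mu$ so that the potential is Lipschitz along $E$) in order to recover bi-Lipschitzness of $\psi|_{E\times S^1}$. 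With this refinement in hand, the remaining verifications are exactly as in Proposition \ref{c11nonpc}, and the family of such $\Om$ (one for each $\al$) furnishes the required counterexamples.
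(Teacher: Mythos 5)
Your construction is essentially the same as the paper's: take a compact $E$ with $0<H^\alpha(E)<\infty$, a Frostman measure $\mu$ on $E$, use its (Green-type) logarithmic potential $u$ to build $\varphi=\frac12\log(1-|z|^2)\pm u$, and form the Hartogs domain $\Omega_\varphi$ with $F$ the part of $\partial\Omega$ lying over $E$. (Incidentally you have the signs arranged correctly: the superharmonic potential must be \emph{subtracted} so that $\Delta(-\varphi)$ fails to be $\ge 0$ where $\mu$ is singular; as written in the paper, with $\varphi=\frac12\log(1-|z|^2)+u$ and $u$ the nonnegative Green potential, $\Delta(-\varphi)=\tfrac{2}{(1-|z|^2)^2}+2\pi\mu\ge 0$ and the domain would in fact be pseudoconvex, so this is a typo you implicitly fix.) The non-pseudoconvexity argument and the verification that $\partial\Omega\setminus F$ is smooth and strictly pseudoconvex are as in the paper.

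Where your write-up diverges is the verification of $H^{1+\alpha}(F)<\infty$, which the paper dismisses with ``it is easy to see''. You correctly observe that the natural parametrisation $\psi(z,\theta)=(z,e^{\varphi(z)+i\theta})$ of $F$ over $E\times S^1$ is Lipschitz \emph{only} if $\varphi$ (equivalently the potential $u$) is Lipschitz on $E$. For $\alpha\in(1,2)$ this is automatic, since $Z_\alpha=\mathcal C^{1,\alpha-1}\subset \mathcal C^{0,1}$, and the product bound gives the desired estimate. For $\alpha\le 1$ the potential is in general only of Zygmund/H\"older class and not Lipschitz (for an $\alpha$-Ahlfors regular $\mu$ with $\alpha<1$ one already has $\int|z-w|^{-1}\,d\mu(w)=\infty$), so the upper bound on $H^{1+\alpha}(F)$ does not follow from the anisotropic covering argument you sketch — this is a genuine point that the paper glosses over. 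However, your proposed remedy (``place $E$ on a Lipschitz arc and select $\mu$ so that the potential is Lipschitz along $E$'') is left as a wish rather than a construction, and there is a real tension here: forcing the potential to be Lipschitz on $E$ controls the Riesz transform of $\mu$ at points of $E$, which pushes against the requirement that $\mu$ have unbounded density with respect to Lebesgue measure (needed precisely to destroy pseudoconvexity). As it stands, your argument is complete for $\alpha\in(1,2)$ and has the same unresolved gap as the paper for $\alpha\le 1$; you have located the difficulty but not closed it.
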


\begin{proof}
We follow the proof of \cite[Th\'eor\`eme 2]{Ab}, itself recapping
and generalizing the results of \cite{Ca}, \cite{Ul},
\cite{Ve}, among others.

Consider a compact set $E\subset \overline D (0,\frac12)$ such that $0< H^\alpha(E) <\infty$. Such sets can be constructed to be 
totally disconnected. For $\alpha<1$, we may
choose a Cantor-type subset of $[-\frac12,\frac12]$. For any
$\alpha \in (0,2)$, we can construct a ``square'' Cantor set as in
\cite[pp. 315--316]{GM}, with $a_n=a:=4^{-1/\alpha}$.

By Frostman's Lemma, see e.g. \cite[Theorem 8.8]{Ma}, there exists a probability measure $\mu$ supported on $E$ such that for all $r\in(0,1)$,
$\mu(D(0,r)) \le C r^\alpha$.  Then the Riesz potential of $\mu$,
\[
u(z):= -\int_E \log\left| \frac{z-w}{1-z \bar w}\right| d\mu(w),
\]
 vanishes on $\partial D$, is locally bounded and 
satisfies $\Delta u = -\mu$ in the distribution sense.
In particular, for any function $g$ of class $\mathcal C^{1,1}$, $u+g$ cannot be subhmarmonic near any point $z$ where
$$
\limsup_{r\to 0} \fr{\mu(D(z,r))}{r^2} = \infty.
$$
Such points always exist.  Indeed, by \cite[Theorem 8.7(1)]{Ma}, for any $s>\alpha$, $C_s(E)=0$,
where
\begin{multline*}
C_s (A):= \\
\sup \left\{ \left( \int_{A\times A} \frac1{|x-y|^s} d\mu(x) d\mu(y) \right)^{-1} : \mu \mbox{ probability measure on } A\right\},
\end{multline*}
and by the discussion before \cite[Definition 8.3]{Ma}, this is equivalent to the fact that for $s>\alpha$,
there is no probability measure $\nu$ supported on $E$ such that $\nu(D(z,r)) \lesssim r^s$ for all $z\in E$.

Let  $\Omega:= \Omega_\varphi$, with 
\[
\varphi(z)= \frac12 \log (1-|z|^2) - u(z).
\]
Then it is easy to see that $\partial \Omega$ is $Z_\alpha$-smooth, strictly pseudoconvex near points $z\in \partial \Omega 
\setminus F$, where
\[
F:= \{(z,w): z\in E, |w|= \exp(\varphi(z))\};
\]
that $0< H^{1+\alpha}(F)<\infty$, and that $\Omega$ is not pseudoconvex near the points of $F$ where $\mu$
is not dominated by the $2$-dimensional Lebesgue measure,
i.e. where $u$ 
has a singular negative Laplacian.
\end{proof}

\vskip1cm
{\bf Acknowledgements.}  The authors are grateful to Professor Peter Pflug for pointing out an error in a previous version, 
about the description of Grauert and Remmert's results, and showing
us a simplification of the proof of Corollary \ref{alaGR} relying only
on those original results.

We wish to thank the anonymous referee for his/her very careful reading, which weeded out many small errors.

{}

\end{document}